\newtheorem{thm}{Theorem}[section]
\newtheorem{corollary}[thm]{Corollary}
\newtheorem{lemma}[thm]{Lemma}
\newtheorem{proposition}[thm]{Proposition}
\theoremstyle{definition}
\newtheorem{definition}[thm]{Definition}
\newtheorem{remark}[thm]{Remark}
\newtheorem{example}[thm]{Example}
\begin{document}
\baselineskip=15pt
\title{On $1$-absorbing prime and weakly $1$-absorbing prime subsemimodules}

\author[M. Adarbeh]{Mohammad Adarbeh $^{(\star)}$}
\address{Department of Mathematics, Birzeit University, Birzeit,  Palestine}
\email{madarbeh@birzeit.edu}
\author[M. Saleh]{Mohammad Saleh  }
\address{Department of Mathematics, Birzeit University, Birzeit,  Palestine}
\email{msaleh@birzeit.edu}

\thanks{$^{(\star)}$ Corresponding author}
\date{}

\begin{abstract}  In this paper, we introduce the concepts of $1$-absorbing prime and weakly $1$-absorbing prime subsemimodules over commutative semirings. Let $S$ be a commutative semiring with $1 \neq 0$ and $M$ an $S$-semimodule. A proper subsemimodule $N$ of $M$ is called $1$-absorbing prime (weakly $1$-absorbing prime) if, for all nonunits $a, b \in S$ and $m \in M$, $abm \in N$ ($0 \neq abm \in N$) implies $ab \in (N :_{S} M)$ or $m \in N$. We study many properties of these concepts. For example, we show that a proper subsemimodule $N$ of $M$ is $1$-absorbing prime if and only if for all proper ideals $I, J$ of $S$ and subsemimodule $K$ of $M$ with $IJK \subseteq N$, either $IJ \subseteq (N:_{S} M)$ or $K \subseteq N$. Also, we prove that a proper subtractive subsemimodule $N$ of $M$ is weakly $1$-absorbing prime if and only if for all proper ideals $I, J$ of $S$ and subsemimodule $K$ of $M$ with $0 \neq IJK \subseteq N$, either $IJ \subseteq (N:_{S} M)$ or $K \subseteq N$.
\end{abstract}

\subjclass[2020]{13A15, 13Cxx, 16D80, 16Y60}

\keywords{Semirings, Semimodules, $1$-absorbing prime subsemimodules, Weakly $1$-absorbing prime subsemimodules}

\maketitle

\section{Introduction}

\noindent
Semirings are important algebraic structures with numerous applications in science and engineering. The concept of a semiring was first introduced by Vandiver \cite{V} in 1934. Throughout, $S$ denotes a commutative semiring with a nonzero identity and $M$ a unital $S$-semimodule. Let $N$ be a subsemimodule of $M$. The set $\{s\in S\mid sM\subseteq N\}$ forms an ideal of $S$ and is denoted by $(N:_{S} M)$. In particular, the ideal $(0:_{S} M)$ is called the annihilator of $M$ and is denoted by $\text{Ann}(M)$. If $\text{Ann}(M)=0$, then $M$ is called a faithful $S$-semimodule \cite{NG}. $N$ is called subtractive if, whenever $x,x+y\in N$, then $y\in N$ for all $x,y\in M$. A subset $T$ of $S$ is called multiplicatively closed if $1\in T$ and $xy \in T$ for all $x,y \in T$. Let $T$ be a multiplicatively closed subset $S$. The localization of $M$ at $T$ is defined as follows: First, define an equivalence relation $\sim$ on $M\times T$ by $(m, t) \sim (m',t')$ if and only if $stm' =st'm$ for some $s\in T$. Let $\frac{m}{t}$ denote the equivalence class of $(m,t)\in M\times T$ and let $T^{-1}M$ denote the set of all equivalence classes of $M\times T$. Define addition on $T^{-1} M$ by $\frac{m}{t} +\frac{m'}{t'} = \frac{t'm+tm'}{tt'}$. For $\frac{s}{t}\in T^{-1}S$ and $\frac{m}{t'}\in T^{-1}M$, define $\frac{s}{t}\cdot\frac{m}{t'}= \frac{sm}{tt'}$. Then $T^{-1}M$ is an $T^{-1}S$-semimodule \cite{D}. For more details on semirings, we may refer to \cite{REA,G,PNS,V}. 

Let $R$ be a commutative ring with $1\not=0$. A proper ideal $I$ of $R$ is called prime if for all $a, b \in R$, $ab \in I$ implies $a \in I$ or $b \in I$. Anderson and Smith \cite{AS} introduced the concept of weakly prime ideals of commutative rings as a generalization of the concept of prime ideals. A proper ideal $I$ of $R$ is called weakly prime if for all $a, b \in R$, $0 \not= ab \in I$ implies $a \in I$ or $b \in I$. Badawi \cite{Ab_{2}} introduced the notion of $2$-absorbing ideals of commutative rings as a generalization of the notion of prime ideals. A proper ideal $I$ of $R$ is called $2$-absorbing if for all $a,b,c\in R$, $abc \in I$ implies $ab \in I$ or $ac \in I$ or $bc \in I$. Badawi and Youseﬁan \cite{AB3} introduced the notion of weakly $2$-absorbing ideals of commutative rings as a generalization of the notion of weakly prime ideals. A proper ideal $I$ of $R$ is called weakly $2$-absorbing if for all $a,b,c\in R$, $0\not=abc \in I$ implies $ab \in I$ or $ac \in I$ or $bc \in I$. Yassine et al. \cite{NNY} introduced the concept of $1$-absorbing prime ideals of commutative rings, which is weaker than the concept of prime ideals and stronger than the concept of $2$-absorbing ideals. A proper ideal $I$ of $R$ is called $1$-absorbing prime if for all nonunits $a,b,c\in R$, $abc \in I$ implies $ab \in I$ or $c \in I$. Koç et al. \cite{KTY} introduced the concept of weakly $1$-absorbing prime ideals of commutative rings, which is weaker than the concept of weakly prime ideals and stronger than the concept of weakly $2$-absorbing ideals. A proper ideal $I$ of $R$ is called weakly $1$-absorbing prime if for all nonunits $a,b,c\in R$, $0\not=abc \in I$ implies $ab \in I$ or $c \in I$. Recently, Ugurlu \cite{U} introduced the concept of $1$-absorbing prime submodules over commutative rings. A proper submodule $N$ of an $R$-module $M$ is called $1$-absorbing prime if for all nonunits $a, b \in R$ and $m \in M$, $abm \in N$ implies $ab \in (N :_{R} M)$ or $m \in N$. For more details on the absorbing like-properties in commutative (semi)rings, we may refer to \cite{MAMS,Ab_{2},AB3,BY,BYC,BN,KTY,PB,MSIM,NNY}.

In this paper, we define and study the concepts of $1$-absorbing prime and weakly $1$-absorbing prime subsemimodules over commutative semirings. Let $S$ be a commutative semiring and $M$ be an $S$-semimodule. A proper subsemimodule $N$ of $M$ is called $1$-absorbing prime (weakly $1$-absorbing prime) if for all nonunits $a, b \in S$ and $m \in M$, $abm \in N$ ($0\not = abm\in N$) implies $ab \in (N :_{S} M)$ or $m\in N$.      

In Section 2, we investigate some properties of $1$-absorbing prime subsemimodule. For example, we prove that if $N$ is a $1$-absorbing prime subsemimodule of $M$, then $(N:_{S} M)$ is a $1$-absorbing prime ideal of $S$. The converse of the last fact is not true in general (see Example \ref{ex2}), however; it is true if $M$ is an $MC$ multiplication $S$-semimodule (see Corollary \ref{cormc} (1)). In Theorem \ref{thmlocal}, we show that if an $S$-semimodule $M$ has a $1$-absorbing prime subsemimodule that is not prime, then $S$ is a local semiring. In Theorem \ref{th1}, we give some characterizations of $1$-absorbing prime subsemimodules. Theorem \ref{th2N} characterizes $1$-absorbing prime subsemimodules of multiplication $S$-semimodules. More precisely, we prove that a proper subsemimodule $N$ of a multiplication $S$-semimodule $M$ is $1$-absorbing prime if and only if for all subsemimodules $N_1,N_2,N_3$ of $M$ with $N_1N_2N_3\subseteq N$, either $N_1N_2 \subseteq N$ or $N_3 \subseteq N$. The last fact proves that the condition "$M$ is a faithful finitely generated module" in \cite[Theorem 4.1]{U} can be removed (see Corollary \ref{corg}). 

In Section 3, we define the concept of weakly $1$-absorbing prime subsemimodule as a generalization of the concept of $1$-absorbing prime subsemimodule. After defining this concept, we study some of its properties. Firstly, we will provide an example (Example \ref{exwnot1}) of a weakly $1$-absorbing prime subsemimodule that is not $1$-absorbing prime. In Theorem \ref{th3}, we give some characterizations of weakly $1$-absorbing prime subtractive subsemimodules. The last result in this section proves that if $M$ is a multiplication semimodule over a local semiring $S$ and $N$ is a weakly $1$-absorbing prime subtractive subsemimodule of $M$ that is not $1$-absorbing prime, then $N^3=0$.

\section{$1$-absorbing prime subsemimodules}\label{d}
We start this section with the following definition of $1$-absorbing prime subsemimodules over commutative semirings:
\begin{definition}\label{d1}
Let $S$ be a commutative semiring and $M$ an $S$-semimodule. A proper subsemimodule $N$ of $M$ is called $1$-absorbing prime if for all nonunits $a, b \in S$ and $m \in M$, $abm \in N$ implies $ab \in (N:_{S} M)$ or $m \in N$. 
\end{definition}

Recall that a semiring $S$ is called local if it has exactly one maximal ideal. Equivalently, the sum of two nonunits of $S$ is a nonunit of $S$ \cite{PNV}. 
\begin{example}\label{e1}
    Let $S$ be a local semiring with maximal ideal $\mathfrak{m}$ such that $\mathfrak{m}^2=0$. Let $M$ be any $S$-semimodule. Then every proper subsemimodule of $M$ is $1$-absorbing prime. To see this, let $N$ be any proper subsemimodule of $M$ and suppose that $abx \in N$ for some nonunits $a,b \in S$ and some $x\in M$. Then $a,b\in \mathfrak{m}$ and so $ab\in \mathfrak{m}^{2}=0$. Hence $ab\in (N:_{S} M)$. Thus $N$ is $1$-absorbing prime.
\end{example}

Let $S$ be a semiring and $M$ an $S$-semimodule. A proper subsemimodule $N$ of $M$ is called prime if whenever $am\in N$ for $a\in S$ and $m\in M$, either $a\in (N:_{S} M)$ or $m\in N$ \cite{E}.
     
\begin{remark}\label{p1} Let $S$ be a semiring and $M$ an $S$-semimodule. Then every prime subsemimodules $N$ of $M$ is $1$-absorbing prime.
\end{remark}
\begin{proof}
   Suppose that $abm \in N$ for some nonunits $a, b \in S$ and $m\in M$. Since $N$ is a prime subsemimodules, either $ab \in (N:_{S} M)$ or $m \in N$. Hence $N$ is $1$-absorbing prime.
\end{proof} 

\begin{thm}\label{thmlocal}
    Let $M$ be an $S$-semimodule. If $M$ has a $1$-absorbing prime subsemimodule that is not prime, then $S$ is a local semiring.
\end{thm}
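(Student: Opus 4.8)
The plan is to argue by contraposition, or rather by a direct proof that extracts from the failure of primality a ``witness'' product of two nonunits that, combined with the $1$-absorbing prime hypothesis, forces every nonunit of $S$ to lie in a single maximal ideal. So suppose $N$ is a $1$-absorbing prime subsemimodule of $M$ that is not prime. Since $N$ is not prime, there exist $s \in S$ and $m \in M$ with $sm \in N$ but $s \notin (N :_S M)$ and $m \notin N$. First I would observe that $s$ must be a nonunit: if $s$ were a unit then $sm \in N$ would give $m = s^{-1}(sm) \in N$, a contradiction. Thus we have a nonunit $s$ and an element $m \notin N$ with $sm \in N$ and $sM \not\subseteq N$.

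Now the key step: I claim every nonunit $a \in S$ satisfies $as \in (N :_S M)$. Indeed, let $a$ be any nonunit. Then $a$ and $s$ are both nonunits, and $asm = a(sm) \in N$ since $sm \in N$ and $N$ is a subsemimodule. Applying the $1$-absorbing prime property to the nonunits $a, s$ and the element $m$, we get $as \in (N :_S M)$ or $m \in N$; since $m \notin N$, we conclude $as \in (N :_S M)$. In particular, for every nonunit $a$, $asM \subseteq N$, i.e. $a \cdot (sM) \subseteq N$.

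Next I would use this to show the set of nonunits is closed under addition, which by the stated characterization of local semirings (the sum of two nonunits is a nonunit) finishes the proof. Suppose for contradiction that $a, b$ are nonunits with $a + b$ a unit. Since $sM \not\subseteq N$, pick $x \in M$ with $sx \notin N$. From the previous paragraph, $a(sx) \in N$ and $b(sx) \in N$, hence $(a+b)(sx) = a(sx) + b(sx) \in N$ because $N$ is a subsemimodule. But $a + b$ is a unit, so $sx = (a+b)^{-1}\big((a+b)(sx)\big) \in N$, contradicting the choice of $x$. Therefore the sum of any two nonunits is a nonunit, so $S$ is local.

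The main obstacle I anticipate is purely a matter of being careful with the semiring setting rather than anything deep: in a semiring there are no additive inverses, so I must be sure that every manipulation uses only closure of $N$ under the semimodule operations and multiplication by units (which are genuinely invertible), never subtraction. The argument above is written to respect that. A secondary point to double-check is that the hypothesis ``$N$ is not prime'' is being used in the strong form ``there exist $s, m$ with $sm \in N$, $s \notin (N:_S M)$, $m \notin N$'' — this is exactly the negation of the definition of prime subsemimodule given before the theorem, so no issue arises. One should also note $N$ proper is built into ``subsemimodule'' here, so $(N :_S M) \neq S$ and the nonunit $s$ genuinely exists outside it; but in fact we only ever need $s \notin (N:_S M)$, which we have directly.
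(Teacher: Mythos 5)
Your proof is correct and follows essentially the same route as the paper: extract $s, m$ witnessing the failure of primality, use the $1$-absorbing property to show $ts \in (N:_S M)$ for every nonunit $t$, and deduce that a unit sum of two nonunits would force $s \in (N:_S M)$ (you phrase this at the level of an element $sx \notin N$, the paper at the level of the ideal $(N:_S M)$ itself, but these are the same argument). A small point in your favor: you explicitly check that $s$ is a nonunit, a step the paper's proof asserts without justification.
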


\begin{proof}
    Suppose that $N$ is a $1$-absorbing prime subsemimodule that is not prime. Then there exists a nonunit $s \in S$; and $m \in M$ such that $sm \in N$ but $s\notin (N:_{S} M)$ and $m \notin N$ . Let $t_{1},t_{2}\in S$ be nonunits. Since $N$ is $1$-absorbing prime, $t_{i}sm \in N$, and
$m\notin N$, then $t_{i}s \in (N:_{S} M)$ for $i=1,2$. It follows that $(t_{1}+t_{2})s\in (N:_{S} M)$.  If $t_{1} + t_{2}$ is a unit in $S$, then $s=(t_{1} + t_{2})^{-1}(t_{1} + t_{2})s\in (N:_{S} M)$, a contradiction. So $t_{1} + t_{2}$ is a nonunit in $S$. Hence, the sum of two nonunits in $S$ is again a nonunit in $S$. Thus $S$ is a local semiring.
\end{proof}

\begin{corollary}\label{corlocal}
    Let $S$ be a non-local semiring and $M$ an $S$-semimodule. Then a proper subsemimodule $N$ of $M$ is $1$-absorbing prime if and only if it is prime.
\end{corollary}

\begin{proposition}\label{p2}
     Let $S$ be a semiring and $M$ an $S$-semimodule. If $N$ is a $1$-absorbing prime subsemimodule of $M$, then $(N:_{S} M)$ is a $1$-absorbing prime ideal of $S$.
\end{proposition}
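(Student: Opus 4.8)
The plan is to check the two things required of a $1$-absorbing prime ideal directly from the definitions: that $(N:_{S} M)$ is a proper ideal of $S$, and that it satisfies the absorbing condition on triples of nonunits. Properness is immediate: $(N:_{S} M)$ is known to be an ideal, and if $1 \in (N:_{S} M)$ then $M = 1\cdot M \subseteq N$, contradicting that $N$ is a proper subsemimodule; hence $(N:_{S} M) \neq S$.

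For the absorbing condition, I would take arbitrary nonunits $a,b,c \in S$ with $abc \in (N:_{S} M)$ and aim to conclude that $ab \in (N:_{S} M)$ or $c \in (N:_{S} M)$. The crucial observation is that $ab$ is itself a nonunit of $S$ (if $ab$ had an inverse $u$, then $a(bu)=1$, making $a$ a unit), so the defining property of a $1$-absorbing prime subsemimodule is applicable to the pair of nonunits $a,b$. Now fix an arbitrary $m \in M$; since $abc \in (N:_{S} M)$ we have $a\,b\,(cm) = abcm \in N$, so by Definition \ref{d1} either $ab \in (N:_{S} M)$ or $cm \in N$.

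To finish I would deal with the quantifier over $m$: if $ab \notin (N:_{S} M)$, then the previous dichotomy forces $cm \in N$ for every $m \in M$, that is $cM \subseteq N$, which says precisely $c \in (N:_{S} M)$. Hence in all cases $ab \in (N:_{S} M)$ or $c \in (N:_{S} M)$, which is exactly the $1$-absorbing prime condition for the ideal $(N:_{S} M)$. I do not expect a genuine obstacle here; the only points needing a moment's care are noticing that $ab$ is a nonunit so that the hypothesis applies at all, and correctly passing from ``$cm \in N$ for all $m$'' to ``$c \in (N:_{S} M)$''.
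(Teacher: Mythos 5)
Your proof is correct and follows essentially the same route as the paper: apply the $1$-absorbing prime condition for $N$ to the nonunits $a,b$ and the element $cm$, then let $m$ range over $M$ to conclude $cM\subseteq N$ when $ab\notin(N:_S M)$. The only stray remark is that you do not actually need $ab$ to be a nonunit to invoke Definition \ref{d1} --- the definition only requires $a$ and $b$ individually to be nonunits, which is given --- but this does not affect the argument.
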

\begin{proof}
   Assume that $abc \in (N:_{S} M)$, where $a, b, c$ are nonunits of $S$, and suppose $ab\notin (N:_{S} M)$. We prove that $c\in (N:_{S} M)$. Let $m \in M$. Since $abc \in (N:_{S} M)$, then $abcM\subseteq N$. Hence $abcm \in N$. But since $N$ is $1$-absorbing prime and $ab\notin (N:_{S} M)$, so $cm \in N$. Since $m$ was arbitrary, then $c\in (N:_{S} M)$. Therefore, $(N:_{S} M)$ is a $1$-absorbing prime ideal of $S$.
\end{proof}

Let $\mathbb{Z}^{\circ}$ denote the semiring of non-negative integers with the usual addition and multiplication. The converse of Proposition \ref{p2} is not true in general, as shown in the following example:

    \begin{example} \label{ex2}
    Let $S=\mathbb{Z}^{\circ}$, $M=\mathbb{Z}^{\circ}\times \mathbb{Z}^{\circ}$, and $N=2\mathbb{Z}^{\circ}\times 0$. Since $(N:_{S} M)=\{0\}$ is a prime ideal of $S$, then it is a $1$-absorbing prime ideal of $S$. However, $N$ is not $1$-absorbing prime subsemimodule of $M$ since $2\cdot 2\cdot (1,0)=(4,0)\in N$ but $4\notin (N:_{S} M)$ and $(1,0)\notin N$. 
 \end{example}

 Recall that if $N$ is a subsemimodule of $M$ and $J$ is a subset of $S$, then the residual of $N$ by $J$ is the set $(N:_{M} J)=\{m\in M: Jm\subseteq N\}$. The following theorem gives some characterizations of $1$-absorbing prime subsemimodules.

\begin{thm}\label{th1}
    Let $M$ be an $S$-semimodule and $N$ a proper subsemimodule of $M$. Then the following statements are equivalent:
    \begin{enumerate}
    \item[(1)] $N$ is a $1$-absorbing prime subsemimodule of $M$.
   \item[(2)] For all nonunits $a,b\in S$ with $ab \notin (N:_{S} M)$, $(N :_{M} ab) \subseteq N$.
   \item[(3)] For all nonunits $a,b\in S$ and subsemimodule $K$ of $M$ with $abK \subseteq N$, either $ab \in (N:_{S} M)$ or $K \subseteq N$.
  \item[(4)] For all proper ideals $I,J$ of $S$ and subsemimodule $K$ of $M$ with $IJK \subseteq N$, either $IJ \subseteq (N:_{S} M)$ or $K \subseteq N$.
    \end{enumerate}
\end{thm}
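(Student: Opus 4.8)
The plan is to prove the chain of implications $(1)\Rightarrow(2)\Rightarrow(3)\Rightarrow(4)\Rightarrow(1)$, which is the most economical way to close the loop. The implications $(1)\Leftrightarrow(2)$ and $(2)\Leftrightarrow(3)$ are essentially unwindings of the definitions, so the real content is $(3)\Rightarrow(4)$ and the closing of the loop.

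First I would do $(1)\Rightarrow(2)$: fix nonunits $a,b$ with $ab\notin(N:_S M)$, and take $m\in(N:_M ab)$, so $abm\in N$; by Definition \ref{d1}, since $ab\notin(N:_S M)$, we get $m\in N$. For $(2)\Rightarrow(3)$: given nonunits $a,b$ and a subsemimodule $K$ with $abK\subseteq N$, if $ab\notin(N:_S M)$ then $K\subseteq(N:_M ab)\subseteq N$ by (2). Next, $(3)\Rightarrow(1)$ is immediate by taking $K=Sm$ (or just noting $abm\in N$ forces $ab(Sm)\subseteq N$ since $N$ is a subsemimodule), so that together with $(3)\Rightarrow(4)$ (handled below) and $(4)\Rightarrow(1)$ we get everything; alternatively one can run $(4)\Rightarrow(3)\Rightarrow(2)\Rightarrow(1)\Rightarrow\cdots$. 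I will pick the order $(1)\Rightarrow(2)\Rightarrow(3)\Rightarrow(4)\Rightarrow(1)$.

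The main step is $(3)\Rightarrow(4)$. Here I assume $I,J$ are proper ideals of $S$ and $K$ a subsemimodule of $M$ with $IJK\subseteq N$, and that $K\not\subseteq N$; I must show $IJ\subseteq(N:_S M)$, i.e. $ab\in(N:_S M)$ for every $a\in I$, $b\in J$. Fix such $a,b$; since $I,J$ are proper, $a$ and $b$ are nonunits. Now $abK\subseteq IJK\subseteq N$, so by (3), since $K\not\subseteq N$, we conclude $ab\in(N:_S M)$. As $a\in I$ and $b\in J$ were arbitrary, this shows every product of a generator-pair lies in $(N:_S M)$; since $(N:_S M)$ is an ideal and $IJ$ is generated (as an ideal) by such products $ab$, we get $IJ\subseteq(N:_S M)$. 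The one point to be slightly careful about is that an element of $IJ$ is a finite sum $\sum a_ib_i$ with $a_i\in I$, $b_i\in J$, and each $a_ib_i\in(N:_S M)$, which is closed under addition — so the sum lies in $(N:_S M)$ as well.

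Finally $(4)\Rightarrow(1)$: given nonunits $a,b\in S$ and $m\in M$ with $abm\in N$, set $I=Sa$, $J=Sb$, and $K=Sm$. Since $a$ (resp. $b$) is a nonunit, $Sa$ (resp. $Sb$) is a proper ideal; and $IJK=SaSbSm=Sabm\subseteq N$ because $N$ is a subsemimodule containing $abm$. By (4), either $IJ\subseteq(N:_S M)$ — in which case $ab\in IJ\subseteq(N:_S M)$ — or $K\subseteq N$ — in which case $m\in Sm=K\subseteq N$. Hence $N$ is $1$-absorbing prime. I expect the only mild obstacle is making sure that "proper ideal" and "nonunit" match up correctly in both directions (a principal ideal $Sa$ is proper exactly when $a$ is a nonunit, since $S$ has $1\neq 0$), and that the passage between elementwise statements and ideal/subsemimodule statements uses only closure under addition and under the $S$-action, which is built into the definitions.
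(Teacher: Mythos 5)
Your proposal is correct and follows essentially the same route as the paper: the cycle $(1)\Rightarrow(2)\Rightarrow(3)\Rightarrow(4)\Rightarrow(1)$ with the same choices ($K=Sm$, $I=Sa$, $J=Sb$, etc.). The only cosmetic difference is in $(3)\Rightarrow(4)$, where the paper argues contrapositively (extracting a single product $ab\notin(N:_S M)$ from $IJ\not\subseteq(N:_S M)$) while you argue directly and spell out that elements of $IJ$ are finite sums of products $a_ib_i$ — the same underlying fact, which the paper leaves implicit.
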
 

\begin{proof}
 (1) $\Rightarrow$ (2). Let $a,b\in S$ be nonunits and suppose that $ab \notin (N:_{S} M)$.
Take $m \in (N :_{M} ab)$. Then $abm \in N$. But since $N$ is $1$-absorbing prime and
$ab \notin (N:_{S} M)$, we have $m \in N$. Thus $(N :_{M} ab) \subseteq N$.

(2) $\Rightarrow$ (3). Assume that $abK \subseteq N$ for some nonunits $a,b\in S$ and some subsemimodule $K$ of $M$. If $ab \notin (N:_{S} M)$, then by (2), $(N :_{M} ab)\subseteq N$. So $K \subseteq (N :_{M} ab) \subseteq N$.

(3) $\Rightarrow$ (4). Let $I,J$ be proper ideals of $S$ and $K$ be a subsemimodule of $M$ such that $IJK \subseteq N$. Suppose that $IJ \nsubseteq (N:_{S} M)$. Then there exist nonunits $a \in I$ and $b \in J$ such that $ab \notin (N:_{S} M)$. Since $abK \subseteq IJK \subseteq N$, then by (3) and since $ab \notin (N:_{S} M)$, we have $K \subseteq N$.
 
(4) $\Rightarrow$ (1). Assume that $abm \in N$, where $a,b\in S$ are nonunits and $m \in M$. Take $I=
Sa$, $J = Sb$, and $K = Sm$. Then we have $IJK=(Sa)(Sb)(Sm)=Sabm \subseteq N$. So by (4), $IJ \subseteq (N:_{S} M)$ or $K \subseteq N$. Hence $ab \in (Sa)(Sb)=IJ \subseteq (N:_{S} M)$ or $m \in Sm=K \subseteq N$. Therefore, $N$ is a $1$-absorbing prime subsemimodule of $M$.
\end{proof}	
Recall that an $S$-semimodule $M$ is called a multiplication $S$-semimodule if for each subsemimodule $N$ of $M$, there is an ideal $I$ of $S$ such that $N = IM$. If $M$ is a multiplication $S$-semimodule and $N$ is a subsemimodule of $M$, then $N = (N:_{S} M)M$ \cite{NG}. In the next theorem, we give a characterization of $1$-absorbing prime subsemimodules of multiplication $S$-semimodules.  

 \begin{thm}\label{th2N}
   Let $M$ be a multiplication $S$-semimodule and $N$ a proper subsemimodule of $M$. Then the following statements are equivalent:
   \begin{enumerate}
    \item[(1)] $N$ is a $1$-absorbing prime subsemimodule of $M$.
  \item[(2)] For all subsemimodules $N_1,N_2,N_3$ of $M$ with $N_1N_2N_3\subseteq N$, either $N_1N_2 \subseteq N$ or $N_3 \subseteq N$.
    \end{enumerate}   
 \end{thm}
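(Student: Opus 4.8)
The plan is to move freely between elements of $M$ and subsemimodules of $M$, using that in a multiplication semimodule every subsemimodule $N_i$ has the form $N_i=(N_i:_{S}M)M$, and then to apply Theorem \ref{th1}(4).

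For the implication $(2)\Rightarrow(1)$ I would take nonunits $a,b\in S$ and $m\in M$ with $abm\in N$, and set $N_1=SaM$, $N_2=SbM$, $N_3=Sm$. Since $M$ is a multiplication semimodule, $N_3=I_3M$ for some ideal $I_3$, and a short computation with the product of subsemimodules gives $N_1N_2N_3=(Sa)(Sb)I_3M=(Sab)(Sm)=S(abm)\subseteq N$, the inclusion holding because $abm\in N$ and $N$ is a subsemimodule. Then $(2)$ forces $N_1N_2=SabM\subseteq N$ (whence $ab\in(N:_{S}M)$) or $N_3=Sm\subseteq N$ (whence $m\in N$), so $N$ is $1$-absorbing prime.

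For $(1)\Rightarrow(2)$ I would start from subsemimodules $N_1,N_2,N_3$ with $N_1N_2N_3\subseteq N$ and write $I_i=(N_i:_{S}M)$, so $N_i=I_iM$ and $I_1I_2N_3=N_1N_2N_3\subseteq N$. Assuming $N_1,N_2$ proper, the ideals $I_1,I_2$ are proper, so Theorem \ref{th1}(4), applied to $I_1,I_2$ and the subsemimodule $N_3$, gives $I_1I_2\subseteq(N:_{S}M)$ or $N_3\subseteq N$; in the first case $N_1N_2=I_1I_2M\subseteq(N:_{S}M)M\subseteq N$, so $(2)$ holds. One could also argue purely in $S$: by Proposition \ref{p2} the ideal $(N:_{S}M)$ is $1$-absorbing prime, and then $I_1I_2I_3\subseteq(N:_{S}M)$ with $I_1,I_2$ proper yields $I_1I_2\subseteq(N:_{S}M)$ or $I_3\subseteq(N:_{S}M)$.

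The step I expect to be the main obstacle is the bookkeeping with products of subsemimodules of a multiplication semimodule: one has to know that $N_1N_2N_3$ does not depend on the chosen writings $N_i=I_iM$ and that it equals $I_1I_2I_3M=I_1I_2N_3$, and one has to handle the degenerate case in which some $(N_i:_{S}M)$ equals $S$ (equivalently some $N_i=M$), since Theorem \ref{th1}(4) is a statement about proper ideals; concentrating on proper $N_1,N_2$ is precisely what makes that theorem applicable and is where the content of the equivalence sits. Once those technical points are in place, $(1)\Leftrightarrow(2)$ follows directly from Theorem \ref{th1}(4).
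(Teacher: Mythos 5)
Your route is the one the paper takes: both directions are funneled through Theorem \ref{th1}(4) by writing subsemimodules of the multiplication semimodule as $I_iM$. For $(2)\Rightarrow(1)$ you check the definition directly with $N_1=SaM$, $N_2=SbM$, $N_3=Sm$, whereas the paper checks condition (4) of Theorem \ref{th1} for arbitrary proper ideals $I_1,I_2$ and $K=I_3M$; that difference is immaterial. (Both you and the paper quietly treat the product $N_1N_2N_3$ as computed from chosen presentations $N_i=I_iM$; the well-definedness point you flag is real but is left unaddressed by the paper as well.)

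The substantive problem is the degenerate case you name and then set aside in $(1)\Rightarrow(2)$: you write ``assuming $N_1,N_2$ proper'' and never return to the excluded case, so your proof of $(1)\Rightarrow(2)$ is incomplete as it stands. Your instinct that this is where the difficulty sits is correct, and the paper's own proof commits the same omission --- it applies Theorem \ref{th1}(4) to $I_1,I_2$ without verifying that they are proper. Moreover the omission is not repairable: take $S=M=\mathbb{Z}_4$ and $N=0$. By Example \ref{e1}, $N$ is $1$-absorbing prime (here $\mathfrak{m}=2\mathbb{Z}_4$ and $\mathfrak{m}^2=0$), and $N_1=M$, $N_2=N_3=2\mathbb{Z}_4$ satisfy $N_1N_2N_3=\mathfrak{m}^2M=0\subseteq N$, yet $N_1N_2=2\mathbb{Z}_4\nsubseteq N$ and $N_3\nsubseteq N$; note that $I_1M=M$ forces $I_1=S$ here, so no proper presentation of $N_1$ is available. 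Hence $(1)\Rightarrow(2)$ genuinely fails unless $N_1,N_2$ are required to be proper; with that restriction each $(N_i:_SM)$ is automatically a proper ideal and your argument (and the paper's) closes. So your proposal is essentially the intended proof, but the ``obstacle'' you deferred is a real gap --- in the statement of the theorem as much as in its proof.
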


 \begin{proof}
     (1) $\Rightarrow$ (2). Suppose that $N_1N_2N_3\subseteq N$ for some subsemimodules $N_1,N_2,N_3$ of $M$. Since $M$ is multiplication $S$-semimodule, then for $i=1,2,3$, $N_i=I_iM$ for some ideal $I_i$ of $S$. So $I_1I_2I_3M\subseteq N$. By Theorem \ref{th1}, $I_1I_2\subseteq (N:_S M)$ or $I_3M\subseteq N$. So $I_1I_2M\subseteq N$ or $I_3M\subseteq N$. That is, $N_1N_2 \subseteq N$ or $N_3 \subseteq N$.\\
       (2) $\Rightarrow$ (1). Suppose that $I_1I_2K\subseteq N$ for some proper ideals $I_1,I_2$ of $S$ and some subsemimodule $K$ of $M$. Since $M$ is multiplication $S$-semimodule, we have $K=I_3M$ for some ideal $I_3$ of $S$. So we have $I_1I_2I_3M\subseteq N$. Let $N_1=I_1M$ and $N_2=I_2M$. Then $N_1N_2K=I_1I_2I_3M\subseteq N$. By (2), $N_1N_2\subseteq N$ or $K\subseteq N$. But if $N_1N_2\subseteq N$, then $I_1I_2M\subseteq N$ and so $I_1I_2\subseteq (N:_S M)$. Hence $I_1I_2\subseteq (N:_S M)$ or $K\subseteq N$. Thus by Theorem \ref{th1}, $N$ is a $1$-absorbing prime subsemimodule of $M$.
 \end{proof}

 \begin{corollary}\label{corg}
     Let $R$ be a commutative ring and $M$ a multiplication $R$-module. Then $N$ is a $1$-absorbing prime submodule of $M$ if and only if for each submodules $N_1,N_2,N_3$ of $M$ with $N_1N_2N_3\subseteq N$, either $N_1N_2 \subseteq N$ or $N_3 \subseteq N$.
 \end{corollary}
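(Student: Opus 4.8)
The plan is to deduce Corollary \ref{corg} as the special case of Theorem \ref{th2N} in which the base semiring is a ring. First I would observe that every commutative ring with $1\neq 0$ is in particular a commutative semiring with $1\neq 0$, and every $R$-module is a unital $R$-semimodule; moreover the set of units of $R$ as a ring coincides with its set of units as a semiring, so the word ``nonunit'' has the same meaning in both settings. Consequently a proper submodule $N$ of $M$ is a $1$-absorbing prime submodule in the sense of Ugurlu \cite{U} precisely when it is a $1$-absorbing prime subsemimodule in the sense of Definition \ref{d1}.

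Next I would check that the auxiliary notions agree. The residuals $(N:_{S}M)$ and $(N:_{M}J)$ are computed from the same defining conditions ($sM\subseteq N$ and $Jm\subseteq N$, respectively), so they are unchanged when $M$ is regarded as a module rather than a semimodule. Likewise, a multiplication $R$-module is exactly a multiplication $R$-semimodule, since in both cases the requirement is that every submodule (subsemimodule) be of the form $IM$ for an ideal $I$ of $R$; and for such $M$, the product $N_{1}N_{2}$ of submodules is defined by writing $N_{i}=I_{i}M$ and setting $N_{1}N_{2}=I_{1}I_{2}M$, which is precisely the product appearing in the statement of Theorem \ref{th2N}. With these identifications, applying Theorem \ref{th2N} to the commutative semiring $S=R$ and the multiplication $S$-semimodule $M$ gives exactly the asserted equivalence.

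I do not expect a genuine obstacle here: the entire content is the verification that the ring/module vocabulary embeds into the semiring/semimodule vocabulary used in Theorem \ref{th2N}, and the only points meriting an explicit word are the coincidence of ``nonunit'' and of the submodule product $N_{1}N_{2}$. Once these are recorded, the corollary is immediate. As noted in the introduction, this also shows that the hypothesis ``$M$ is a faithful finitely generated module'' in \cite[Theorem~4.1]{U} can be dropped, since no such assumption is used in Theorem \ref{th2N} or in the present reduction.
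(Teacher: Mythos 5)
Your proposal is correct and matches the paper's intent exactly: the paper states Corollary \ref{corg} with no separate proof, treating it as the immediate specialization of Theorem \ref{th2N} to the case where the semiring is a ring, which is precisely your reduction. Your explicit checks (nonunits coincide, subsemimodules of a module over a ring are submodules since $-1$ is a scalar, and the submodule product agrees) are exactly the routine verifications the paper leaves implicit.
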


 \begin{corollary}\cite[Theorem 4.1]{U}.
      Let $M$ be a faithful finitely generated multiplication $R$-module. Then $N$ is a $1$-absorbing prime submodule of $M$ if and only if for each submodules $N_1,N_2,N_3$ of $M$ with $N_1N_2N_3\subseteq N$, either $N_1N_2 \subseteq N$ or $N_3 \subseteq N$.
 \end{corollary}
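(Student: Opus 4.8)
The plan is to derive this statement directly from Corollary \ref{corg}, observing that the hypotheses ``faithful'' and ``finitely generated'' are never used.

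First I would record the (entirely formal) fact that a commutative ring $R$ with $1 \neq 0$ is a commutative semiring with $1 \neq 0$, that an $R$-module is an $R$-semimodule, and that in this setting the residuals $(N:_R M)$ and $(N:_M J)$, the product $N_iN_j$ of submodules of a multiplication module, and the very notion of a multiplication module all coincide with their semimodule-theoretic counterparts. Hence every result proved in Section \ref{d} applies verbatim to $R$-modules; in particular Corollary \ref{corg} is nothing but Theorem \ref{th2N} read in the category of $R$-modules.

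Next I would note that a faithful finitely generated multiplication $R$-module is, a fortiori, a multiplication $R$-module. Applying Corollary \ref{corg} to $M$ then gives precisely the asserted equivalence, with no appeal to faithfulness or finite generation. This is in fact the whole point of stating the corollary: to record that \cite[Theorem 4.1]{U} holds under strictly weaker hypotheses.

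I do not expect any genuine obstacle. The only care required is bookkeeping: one should check that the product of submodules of a multiplication module used in \cite{U} agrees with the one used here, namely $I_1M\cdot I_2M = I_1I_2M$, and recall the identity $N=(N:_R M)M$ valid for any submodule $N$ of a multiplication module, which is exactly what drives the proof of Theorem \ref{th2N}. All of the substantive content was already handled there, so the proof here reduces to this one-line specialization.
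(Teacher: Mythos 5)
Your proposal is correct and matches the paper's intent exactly: the corollary is stated without proof precisely because it is the immediate specialization of Corollary \ref{corg} (itself the module-theoretic reading of Theorem \ref{th2N}) to a module that happens also to be faithful and finitely generated. Your bookkeeping remarks about the product $I_1M\cdot I_2M=I_1I_2M$ and the identity $N=(N:_R M)M$ are the right things to check and are exactly what the proof of Theorem \ref{th2N} relies on.
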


 Let $M$ be an $S$-semimodule and $\mathfrak{m}$ be a maximal ideal of $S$. Recall that
 \begin{enumerate}
     \item[(i)] $M$ is called multiplicatively cancellative (abbreviated as $MC$) if for any $s,s' \in S$ and $0\not= x \in M$, $sx=s'x$ implies $s = s'$ \cite{NG}. 
 \item[(ii)] $M$ is called $\mathfrak{m}$-cyclic if there exist $s\in S$, $q \in \mathfrak{m}$, and $x\in M$ such that $s+q=1$ and $sM \subseteq Sx$ \cite{NG}.  
 \end{enumerate}

\begin{lemma} \cite[Theorem 4]{NG} \label{lemjm}
    Suppose that $M$ is an $S$-semimodule. If $M$ is a multiplication semimodule, then for every maximal ideal $\mathfrak{m}$ of $S$, either $M = \{x \in M \mid x =qx~\text{ for some}~ q\in \mathfrak{m}\}$ or $M$ is $\mathfrak{m}$-cyclic.
\end{lemma}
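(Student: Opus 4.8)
The plan is to fix a maximal ideal $\mathfrak{m}$ and to prove the dichotomy by showing that if $M$ is not $\mathfrak{m}$-cyclic, then $M=T$, where $T=\{x\in M\mid x=qx\text{ for some }q\in\mathfrak{m}\}$. A convenient preliminary is to reformulate $\mathfrak{m}$-cyclicity: $M$ is $\mathfrak{m}$-cyclic if and only if there exist $s\in S\setminus\mathfrak{m}$ and $x\in M$ with $sM\subseteq Sx$. One direction is immediate (if $s+q=1$ with $q\in\mathfrak{m}$ then $s\notin\mathfrak{m}$, else $1\in\mathfrak{m}$); for the other, if $s\notin\mathfrak{m}$ then by maximality the ideal $Ss+\mathfrak{m}$ properly contains $\mathfrak{m}$, hence equals $S$, so $1=ts+q$ for some $t\in S$ and $q\in\mathfrak{m}$, and $(ts)M=t(sM)\subseteq Sx$, so $ts$ and $q$ witness $\mathfrak{m}$-cyclicity.

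Assume now that $M$ is not $\mathfrak{m}$-cyclic, and let $x\in M$ be arbitrary. Since $M$ is a multiplication semimodule, $Sx=(Sx:_{S}M)M$; write $I=(Sx:_{S}M)$. If $I\nsubseteq\mathfrak{m}$, choose $s\in I\setminus\mathfrak{m}$; then $sM\subseteq Sx$ with $s\notin\mathfrak{m}$, so by the reformulation $M$ is $\mathfrak{m}$-cyclic, a contradiction. Therefore $I\subseteq\mathfrak{m}$, and in particular $x\in IM$.

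It remains to upgrade $x\in IM$ (with $I\subseteq\mathfrak{m}$) to $x=qx$ for some $q\in\mathfrak{m}$. Write $x=\sum_{i=1}^{n}a_im_i$ with $a_i\in I$ and $m_i\in M$. Since each $a_i\in I=(Sx:_{S}M)$, we get $a_im_j\in Sx$, say $a_im_j=c_{ij}x$ for suitable $c_{ij}\in S$. This places us in a Nakayama-type configuration on the finitely generated subsemimodule $Sx+Sm_1+\cdots+Sm_n$: an adjugate/determinant manipulation of the matrix $(c_{ij})$ combined with the relation $x=\sum_i a_im_i$ should produce an element $q\in I\subseteq\mathfrak{m}$ with $x=qx$, i.e.\ $x\in T$. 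As $x$ was arbitrary, $M=T$, which completes the dichotomy.

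I expect this last step to be the main obstacle. A semiring has no subtraction, so one cannot form $\det\bigl(I_n-(c_{ij})\bigr)$, and Nakayama's lemma genuinely fails over semirings — the alternative ``$M=T$'' records exactly this failure (if $x=qx$ with $q\in\mathfrak{m}$, then $\mathfrak{m}(Sx)=Sx$ while $Sx\neq 0$). The clean resolution is to invoke the semiring version of this computation from \cite[Theorem 4]{NG}; a naive attempt yields only $x=tx$ for some $t\in S$ with no control over whether $t\in\mathfrak{m}$, which is insufficient.
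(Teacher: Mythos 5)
Your reformulation of $\mathfrak{m}$-cyclicity and the reduction to ``$I=(Sx:_{S}M)\subseteq\mathfrak{m}$ for every $x\in M$'' are both correct, but the argument is genuinely incomplete at exactly the step you flag. You cannot close it by ``invoking the semiring version of this computation from \cite[Theorem~4]{NG}'': that theorem \emph{is} the statement being proved (the paper imports it from \cite{NG} without proof), so the fallback is circular; and the adjugate/determinant route you sketch as the alternative is indeed unavailable over a semiring, as you yourself observe. As written, nothing in the proposal produces $q\in\mathfrak{m}$ with $x=qx$; the naive computation only yields $x=tx$ with $t$ uncontrolled.

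The missing idea is not a Nakayama-type manipulation but a \emph{second} application of the multiplication hypothesis, this time to the elements $m_i$. Write $x=\sum_{i=1}^{n}a_im_i$ with $a_i\in I$ and $m_i\in M$. For each $i$ put $J_i=(Sm_i:_{S}M)$, so that $Sm_i=J_iM$; by your own step applied to $m_i$ in place of $x$, non-$\mathfrak{m}$-cyclicity forces $J_i\subseteq\mathfrak{m}$. Then
$a_im_i\in a_i(Sm_i)=a_i J_i M=J_i(a_iM)\subseteq J_i(IM)=J_i(Sx)$,
so $a_im_i=q_ix$ for some $q_i\in J_i\subseteq\mathfrak{m}$: concretely, write $m_i=\sum_k b_{ik}y_{ik}$ with $b_{ik}\in J_i$ and $y_{ik}\in M$, note $a_iy_{ik}\in a_iM\subseteq Sx$, say $a_iy_{ik}=c_{ik}x$, and set $q_i=\sum_k b_{ik}c_{ik}\in J_i$. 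Summing over $i$ gives $x=\bigl(\sum_i q_i\bigr)x$ with $\sum_i q_i\in\mathfrak{m}$, i.e.\ $x\in T$, and no subtraction is used anywhere. This is the El-Bast--Smith argument for multiplication modules, and it transfers verbatim to semimodules; with this replacement for your final step, the rest of your proposal stands.
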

 
\begin{thm}\label{thmjm}
    Let $M$ be an $MC$ multiplication $S$-semimodule and $I$ a $1$-absorbing prime ideal of $S$. If $abx\in IM$, then $ab \in I$ or $x\in IM$ for all nonunits $a,b \in S$ and $x\in M$.
\end{thm}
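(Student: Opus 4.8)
The plan is to reduce this statement, which entangles the ideal $I$ with the subsemimodule $IM$, to a purely ideal-theoretic assertion about $I$ by localizing at an arbitrary maximal ideal. First I would dispose of the case $M=0$ (then $x=0\in IM$ automatically) and record two consequences of the $MC$ hypothesis when $M\neq 0$. One: $M$ is faithful, since $sM=0$ together with a choice of $0\neq y\in M$ gives $sy=0=0\cdot y$, hence $s=0$. Two: for every maximal ideal $\mathfrak m$ of $S$, $M$ is $\mathfrak m$-cyclic. This follows from Lemma \ref{lemjm}, because its first alternative is incompatible with $MC$: a nonzero $y$ with $y=qy$ for some $q\in\mathfrak m$ would, comparing with $y=1\cdot y$, force $q=1\notin\mathfrak m$, a contradiction.

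Next I would use the elementary fact that $x\in IM$ if and only if the residual ideal $(IM:_S x)$ equals $S$, which in turn holds if and only if $(IM:_S x)\not\subseteq\mathfrak m$ for every maximal ideal $\mathfrak m$ of $S$ (every proper ideal lies in a maximal one). So, assuming $abx\in IM$ with $a,b$ nonunits and assuming $ab\notin I$, it suffices to fix a maximal ideal $\mathfrak m$ and exhibit an element of $(IM:_S x)$ lying outside $\mathfrak m$. If $ab\notin\mathfrak m$, then $ab$ itself works since $ab\in(IM:_S x)$, and we are done.

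The substantive case is $ab\in\mathfrak m$. Here I would invoke $\mathfrak m$-cyclicity to produce $s\in S$, $q\in\mathfrak m$, $w\in M$ with $s+q=1$ and $sM\subseteq Sw$; then $s\notin\mathfrak m$ (else $1=s+q\in\mathfrak m$), so $s\neq 0$, and hence $w\neq 0$, for otherwise $sM=0$ contradicts faithfulness. A direct computation on finite sums shows $s(IM)\subseteq Iw$. Writing $sx=rw$ for some $r\in S$, we get $(abr)w=s(abx)\in s(IM)\subseteq Iw$, so $(abr)w=iw$ for some $i\in I$; cancelling $w$ via $MC$ gives $abr=i\in I$. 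If $r$ were a unit, this would force $ab\in I$, contrary to assumption, so $r$ is a nonunit. Now applying the $1$-absorbing prime property of $I$ to the nonunits $a,b,r$ with $abr\in I$ and $ab\notin I$ yields $r\in I$, whence $sx=rw\in IM$, so $s\in(IM:_S x)\setminus\mathfrak m$, completing the argument in this case and hence the proof.

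I expect the main obstacle to be exactly the mismatch between $IM$ and $I$: one cannot in general replace $(IM:_S M)$ by $I$, so the argument must be localized. The role of the $MC$ hypothesis is twofold and crucial here — it rules out the degenerate (non-cyclic) alternative in Lemma \ref{lemjm}, and it supplies the cancellation $(abr)w=iw\Rightarrow abr=i$ that transports the problem from the semimodule $M$ down to the ideal $I$, where alone the $1$-absorbing prime hypothesis is available. A secondary point to be careful about is the bookkeeping that $s\notin\mathfrak m$ entails $s\neq 0$ (since $0\in\mathfrak m$) and thence $w\neq 0$, which is what legitimizes the use of cancellation.
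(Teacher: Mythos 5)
Your proof is correct and follows essentially the same route as the paper's: localize at a maximal ideal, use the $MC$ hypothesis to force the $\mathfrak m$-cyclic alternative of Lemma \ref{lemjm}, cancel the nonzero generator $w$ to descend from $IM$ to the ideal $I$, and apply the $1$-absorbing prime property there. The only cosmetic differences are that you argue over all maximal ideals and conclude directly from $r\in I$ (where $sx=rw$) that $s\in (IM:_S x)\setminus\mathfrak m$, whereas the paper fixes one maximal ideal containing $(IM:_S x)$ and reaches a contradiction via $st\in I$ and the primality of $\mathfrak m$.
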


\begin{proof}
    Let $a,b \in S$ be nonunits and let $x\in M$. If $x=0$, the result is obviously true. So assume that $x\not =0$. Suppose that $abx\in IM$ and $ab\notin I$. Let $J=\{s\in S\mid sx\in IM\}$. If $J\not= S$, then there is a maximal ideal $\mathfrak{m}$ of $S$ such that $J\subseteq \mathfrak{m}$. Since $M$ is $MC$, then $\{z \in M \mid z =qz~\text{ for some}~ q\in \mathfrak{m}\}=\{0\}$. Since $x\in M\setminus \{0\}$ and $M$ is a multiplication $S$-semimodule, then by Lemma \ref{lemjm}, $M$ is $\mathfrak{m}$-cyclic. So there exist $s\in S$, $p \in \mathfrak{m}$, and $y\in M$ such that $s+p=1$ and $sM \subseteq Sy$. It follows that $sx=ty$ for some $t\in S$ which implies $sabx=taby\in IM$ since $abx\in IM$. So we have $s^2abx\in sIM=IsM\subseteq ISy=Iy$. Hence $s^2abx=iy$ for some $i\in I$. But then $staby=s^2abx=iy$. Note that $y\not=0$, for if $y=0$, then $sx=ty=0$ and so $x=sx+px=px$ but $M$ is $MC$ implies $1=p\in \mathfrak{m}$, a contradiction. Again, since $M$ is $MC$, $stab=i\in I$. If $st$ is a unit in $S$, then $ab\in I$, a contradiction. If $st$ is a nonunit in $S$, then since $stab\in I$, $ab\notin I$, and $I$ is a $1$-absorbing prime ideal of $S$, we have $st\in I$. So $s^2x=sty\in IM$ and hence $s^2\in J\subseteq \mathfrak{m}$. Since $\mathfrak{m}$ is a prime ideal, $s\in \mathfrak{m}$. But then $1=s+p\in \mathfrak{m}$, a contradiction. Thus $J= S$ and so $1\in J$, that is, $x\in IM$. Therefore, $ab \in I$ or $x\in IM$.  
\end{proof}

\begin{thm}\label{thmIM}
    Let $M$ be an $MC$ multiplication $S$-semimodule and $I$ be a proper ideal of $S$. Then the following statements are equivalent:  
    \begin{enumerate}
        \item[(1)] $IM$ is a $1$-absorbing prime subsemimodule of $M$.
        \item[(2)] $I$ is a $1$-absorbing prime ideal of $S$.
    \end{enumerate}
    \end{thm}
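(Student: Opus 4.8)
The plan is to deduce the equivalence from Proposition \ref{p2} and Theorem \ref{thmjm}, once the auxiliary identity
\[
(IM :_S M) = I \qquad \text{(for every ideal $I$ of $S$)}
\]
is established for an $MC$ multiplication $S$-semimodule $M$; at the very start I would also dispose of the degenerate case $M=0$, so that $M$ has a nonzero element. Granting this identity, the implication $(1)\Rightarrow(2)$ is immediate: if $IM$ is $1$-absorbing prime, then by Proposition \ref{p2} the ideal $(IM:_S M)$ is $1$-absorbing prime, and that ideal is exactly $I$.

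For $(2)\Rightarrow(1)$ I would proceed as follows. First, $IM$ is a \emph{proper} subsemimodule: since $I$ is proper and $(IM:_S M)=I$, we have $1\notin(IM:_S M)$, i.e.\ $M\not\subseteq IM$. Next, let $a,b\in S$ be nonunits and $m\in M$ with $abm\in IM$. Since $I$ is $1$-absorbing prime, Theorem \ref{thmjm} applies and gives $ab\in I$ or $m\in IM$; in the first case $abM\subseteq IM$, i.e.\ $ab\in(IM:_S M)$. In either case the defining condition for a $1$-absorbing prime subsemimodule is met, so $IM$ is $1$-absorbing prime.

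The real content is the identity $(IM:_S M)=I$. The inclusion $I\subseteq(IM:_S M)$ is clear. For the reverse, take $s\in(IM:_S M)$ and a nonzero $x\in M$; since $M$ is a multiplication semimodule I may write $Sx=PM$ with $P=(Sx:_S M)$, so in particular $x\in PM$. Then
\[
sx\in s(PM)=P(sM)\subseteq P(IM)=I(PM)=I(Sx)=\{\,ix:i\in I\,\},
\]
so $sx=ix$ for some $i\in I$, and since $M$ is $MC$ and $x\neq 0$, cancellation gives $s=i\in I$. I expect the only (mild) obstacle to be the bookkeeping in this chain — verifying $P(IM)=I(PM)$ and $I(Sx)=\{ix:i\in I\}$ as equalities of subsemimodules assembled from finite sums, and making sure the cancellation property is applied to a genuinely nonzero element.

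In writing this up I would first isolate and prove the identity $(IM:_S M)=I$ (as a short lemma or as the opening paragraph), then read off the properness of $IM$ and the two implications as above. Note that, just as in Corollary \ref{corg}, no finiteness or faithfulness hypothesis beyond "$MC$" and "multiplication" is needed.
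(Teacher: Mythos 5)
Your proposal is correct, and for the direction $(2)\Rightarrow(1)$ it coincides with the paper: both reduce everything to Theorem \ref{thmjm}, which does the real work. Where you differ is in $(1)\Rightarrow(2)$ and in the treatment of properness. The paper handles both by citing the cancellation property of $MC$ multiplication semimodules from \cite[Theorem 9]{NG} (namely that $JM\subseteq IM$ forces $J\subseteq I$, whence $IM\neq M$ and $abM\subseteq IM$ forces $ab\in I$), and runs $(1)\Rightarrow(2)$ through the ideal-theoretic characterization in Theorem \ref{th1}. You instead isolate the identity $(IM:_S M)=I$ and prove it from scratch via $Sx=(Sx:_S M)M$ and the chain $sx\in s(PM)=P(sM)\subseteq P(IM)=I(PM)=I(Sx)=Ix$, followed by $MC$-cancellation; your $(1)\Rightarrow(2)$ then drops out of Proposition \ref{p2}. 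Your version is more self-contained (it re-proves the needed part of the cited theorem) and arguably cleaner, since the identity $(IM:_S M)=I$ packages properness and both implications at once; the paper's version is shorter on the page because it outsources the cancellation argument. Each step of your chain checks out: $P(IM)=I(PM)$ and $I(Sx)=\{ix: i\in I\}$ are routine manipulations of finite sums, and cancellation is applied to the fixed nonzero $x$. One small caveat you should make explicit rather than merely flag: if $M=0$ the equivalence genuinely fails (statement (1) is vacuously false while (2) may hold), so $M\neq 0$ must be assumed; the paper carries the same implicit hypothesis through its citation of \cite[Theorem 9]{NG}, so this is not a defect of your argument relative to the paper's.
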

\begin{proof} 
First, note that since $M$ is an $MC$ multiplication $S$-semimodule, then by \cite[Theorem 9]{NG}, $IM\not= M$. \\
$(1)\Rightarrow (2)$. Suppose that $abc\in I$ for some nonunits $a,b,c\in S$. Then $abcM\subseteq IM$. By (1) and Theorem \ref{th1}, either $ab \in (IM:_{S} M)$ or $cM \subseteq IM$. So $abM\subseteq IM$ or $cM\subseteq IM$. Then $SabM\subseteq IM$ or $ScM\subseteq IM$. It follows from \cite[Theorem 9]{NG} that $ab\in Sab\subseteq I$ or $c\in Sc\subseteq I$. Thus (2) holds.  \\
$(2)\Rightarrow (1)$. Suppose that $abx\in IM$ for some nonunits $a,b\in S$ and $x\in M$. Then by Theorem \ref{thmjm}, we have $ab \in I$ or $x\in IM$. But if $ab\in  I$, then $abM\subseteq IM$, that is, $ab\in (IM:_{S} M)$. Hence $ab\in (IM:_{S} M)$ or $x\in IM$. Therefore, (1) holds.
\end{proof}

Part (1) of the following corollary proves that the converse of Proposition \ref{p2} is true under the condition "$M$ is an $MC$ multiplication $S$-semimodule".

\begin{corollary}\label{cormc}
      Let $M$ be an $MC$ multiplication $S$-semimodule and $N$ be a proper subsemimodule of $M$. Then  
    \begin{enumerate}
        \item[(1)] $N$ is a $1$-absorbing prime subsemimodule of $M$ if and only if $(N :_S M)$ is a $1$-absorbing prime ideal of $S$.
        \item[(2)] $N$ is a $1$-absorbing prime subsemimodule of $M$ if and only if $N = IM$ for some $1$-absorbing prime ideal $I$ of $S$. 
    \end{enumerate} 
\end{corollary}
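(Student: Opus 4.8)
The plan is to assemble both equivalences from three ingredients already in hand: Proposition~\ref{p2}, Theorem~\ref{thmIM}, and the structural identity $N=(N:_{S}M)M$, which holds for every subsemimodule $N$ of a multiplication $S$-semimodule (stated just before Theorem~\ref{th2N}). No new argument is needed; the work is purely in chaining these together and keeping track of properness.

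For part (1), one implication is free: if $N$ is $1$-absorbing prime, then $(N:_{S}M)$ is a $1$-absorbing prime ideal of $S$ by Proposition~\ref{p2} (this needs neither the $MC$ nor the multiplication hypothesis). For the converse, I would put $I=(N:_{S}M)$. Since $N$ is proper and $M$ is multiplication, $N=IM$ and $I$ is a proper ideal of $S$ (if $I=S$ then $N=IM=M$, contradicting properness). Assuming $I$ is $1$-absorbing prime, Theorem~\ref{thmIM}, applied to the proper ideal $I$, gives that $IM=N$ is a $1$-absorbing prime subsemimodule of $M$.

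For part (2), the forward direction is part~(1) together with $N=(N:_{S}M)M$: if $N$ is $1$-absorbing prime, then $I:=(N:_{S}M)$ is a $1$-absorbing prime ideal of $S$ and $N=IM$. Conversely, if $N=IM$ for some $1$-absorbing prime ideal $I$ of $S$, then $I$ is in particular proper, so Theorem~\ref{thmIM} immediately yields that $N=IM$ is a $1$-absorbing prime subsemimodule of $M$. The only subtlety — and it is a minor one — is making sure Theorem~\ref{thmIM} is applicable at each use, i.e.\ that the ideal fed into it is proper: on one side this follows because $IM=M$ would force $I=S$, which is impossible for the proper subsemimodule $N$, and on the other side it is automatic since a $1$-absorbing prime ideal is proper by definition. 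There is no genuine obstacle beyond this bookkeeping.
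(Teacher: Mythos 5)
Your proof is correct and follows exactly the route the paper intends: Proposition~\ref{p2} for the forward direction of (1), the identity $N=(N:_{S}M)M$ for multiplication semimodules, and Theorem~\ref{thmIM} for everything else, with the properness bookkeeping handled as you describe (the paper itself notes inside the proof of Theorem~\ref{thmIM} that $IM\neq M$ for proper $I$, via \cite[Theorem 9]{NG}). Nothing is missing.
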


\begin{lemma}\label{lem0}
     Let $M_{1}$, $M_{2}$ be $S$-semimodules and $N_{1}$, $N_{2}$ be subsemimodules of $M_{1}$, $M_{2}$, respectively. Let $f:M_{1}\rightarrow M_{2}$ be a semimodule
homomorphism. 
\begin{enumerate}
    \item[(1)]  $(N_{2} :_{S} M_{2}) \subseteq (f^{-1}(N_{2}):_{S} M_{1})$.
    \item[(2)] If $f$ is onto, then $(N_{1} :_{S} M_{1}) \subseteq (f(N_{1}):_{S} M_{2})$.
\end{enumerate}
\end{lemma}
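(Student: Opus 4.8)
The plan is to establish both inclusions by a direct element-chase, relying only on the definition of the residual ideal and on the defining property $f(sm) = sf(m)$ of a semimodule homomorphism; no special feature of semirings (as opposed to rings) plays a role, so the passage from modules to semimodules is transparent.

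For (1), I would fix an arbitrary $s \in (N_{2} :_{S} M_{2})$, so that $sM_{2} \subseteq N_{2}$, and aim to show $sM_{1} \subseteq f^{-1}(N_{2})$. Given $m \in M_{1}$, the homomorphism property yields $f(sm) = sf(m)$, and since $f(m) \in M_{2}$ we get $f(sm) \in sM_{2} \subseteq N_{2}$; hence $sm \in f^{-1}(N_{2})$. As $m$ was arbitrary, $s \in (f^{-1}(N_{2}) :_{S} M_{1})$, which is the assertion.

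For (2), I would fix $s \in (N_{1} :_{S} M_{1})$, so $sM_{1} \subseteq N_{1}$, and show $sM_{2} \subseteq f(N_{1})$. This is the only place surjectivity is used: given $y \in M_{2}$, write $y = f(m)$ for some $m \in M_{1}$; then $sy = sf(m) = f(sm)$, and since $sm \in sM_{1} \subseteq N_{1}$ we conclude $sy \in f(N_{1})$. Hence $s \in (f(N_{1}) :_{S} M_{2})$.

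There is essentially no obstacle here; the only points worth keeping in mind are that surjectivity is genuinely needed in (2) — without it an arbitrary $y \in M_{2}$ need not lie in the image of $f$, whereas (1) holds for any homomorphism — and that $f^{-1}(N_{2})$ and $f(N_{1})$ are themselves subsemimodules of $M_{1}$ and $M_{2}$, so that the residual ideals in the statement are well defined.
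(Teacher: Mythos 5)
Your proof is correct and is essentially identical to the paper's: both parts are the same direct element-chase using $f(sm)=sf(m)$, with surjectivity invoked only in (2) to write an arbitrary element of $M_{2}$ as $f(m_{1})$. Nothing further is needed.
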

\begin{proof}
    \begin{enumerate}
        \item[(1)]  Let $s\in (N_{2} :_{S} M_{2})$ and let $m_{1}\in M_{1}$, then $sM_{2} \subseteq N_{2}$. Now $f(sm_{1})=sf(m_{1})\in sM_{2}\subseteq N_{2}$. So $sm_{1}\in f^{-1}(N_{2})$. Hence $sM_{1}\subseteq f^{-1}(N_{2})$. That is, $s\in (f^{-1}(N_{2}):_{S} M_{1})$. 
        \item[(2)] Let $s\in (N_{1}:_{S} M_{1})$ and let $m_{2}\in M_{2}$. So there exists $m_{1}\in M_{1}$ such that $m_{2}=f(m_{1})$ since $f$ is onto. But $sM_{1}\subseteq N_{1}$, so $sm_{1}\in N_{1}$. Hence $sm_{2}=f(sm_{1})\in f(N_{1})$. Thus $sM_{2}\subseteq f(N_{1})$. This means that $s\in (f(N_{1}):_{S} M_{2})$.
    \end{enumerate}
\end{proof}

Let $S$ be a semiring and $M$ be an $S$-semimodule. Recall that a proper subsemimodule $N$ of $M$ is called a strong subsemimodule if for each $x\in N$, there exists $y\in N$ such that $x+y=0$ \cite{MP}.

\begin{proposition}
    Let $M_{1},M_{2}$ be $S$-semimodules and $f:M_{1}\rightarrow M_{2}$ be a semimodule
homomorphism. 
\begin{enumerate}
    \item[(1)] If $N_{2}$ is a $1$-absorbing prime subsemimodule of $M_{2}$ and $\text{Im}(f)\nsubseteq N_2$, then $f^{-1}(N_{2})$ is a $1$-absorbing prime subsemimodule of $M_{1}$.
    \item[(2)] If $f$ is onto, $N_{1}$ is a $1$-absorbing prime subtractive strong subsemimodule of $M_{1}$, and $\text{Ker}(f)\subseteq N_{1}$, then $f(N_{1})$ is a $1$-absorbing prime subsemimodule of $M_{2}$.
\end{enumerate} 
\end{proposition}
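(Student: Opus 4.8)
The plan is to treat the two parts separately; in each case I will first check that the candidate subsemimodule is proper and then verify the defining condition of $1$-absorbing primeness directly, transporting the relevant colon ideals via Lemma \ref{lem0}.

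For part (1), note that $f^{-1}(N_2)$ is a subsemimodule of $M_1$ (preimages of subsemimodules under semimodule homomorphisms are subsemimodules) and that it is proper precisely because $\text{Im}(f)\nsubseteq N_2$: if $f^{-1}(N_2)=M_1$ then $f(M_1)\subseteq N_2$, a contradiction. Now take nonunits $a,b\in S$ and $m\in M_1$ with $abm\in f^{-1}(N_2)$. Applying $f$ gives $ab\,f(m)=f(abm)\in N_2$, and since $N_2$ is $1$-absorbing prime, either $ab\in(N_2:_{S} M_2)$ or $f(m)\in N_2$. In the first case Lemma \ref{lem0}(1) yields $ab\in(f^{-1}(N_2):_{S} M_1)$; in the second, $m\in f^{-1}(N_2)$ by definition. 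This part is routine.

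For part (2), the crucial preliminary observation — and the main obstacle — is that $f^{-1}(f(N_1))=N_1$. The inclusion $N_1\subseteq f^{-1}(f(N_1))$ is automatic. For the reverse, let $x\in f^{-1}(f(N_1))$, so $f(x)=f(n)$ for some $n\in N_1$. Since $N_1$ is strong, pick $n'\in N_1$ with $n+n'=0$; then $f(x+n')=f(n)+f(n')=f(n+n')=0$, so $x+n'\in\text{Ker}(f)\subseteq N_1$, and because $N_1$ is subtractive and $n'\in N_1$ we conclude $x\in N_1$. This is the only place where the ``strong'' and ``subtractive'' hypotheses and the condition $\text{Ker}(f)\subseteq N_1$ are used, and it is genuinely needed: in the semimodule setting the equality $f(x)=f(n)$ does not by itself produce an element of $\text{Ker}(f)$, so some inversion (from ``strong'') together with cancellation (from ``subtractive'') is required.

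Once $f^{-1}(f(N_1))=N_1$ is in hand, the rest follows the pattern of part (1). First, $f(N_1)$ is a subsemimodule of $M_2$, and it is proper: if $f(N_1)=M_2$ then $N_1=f^{-1}(f(N_1))=f^{-1}(M_2)=M_1$, contradicting that $N_1$ is proper. Next, take nonunits $a,b\in S$ and $m_2\in M_2$ with $abm_2\in f(N_1)$; write $m_2=f(m_1)$ using surjectivity, so $f(abm_1)=abm_2\in f(N_1)$ and hence $abm_1\in f^{-1}(f(N_1))=N_1$. Since $N_1$ is $1$-absorbing prime, either $ab\in(N_1:_{S} M_1)$ or $m_1\in N_1$; in the first case Lemma \ref{lem0}(2) gives $ab\in(f(N_1):_{S} M_2)$, and in the second $m_2=f(m_1)\in f(N_1)$. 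Hence $f(N_1)$ is $1$-absorbing prime.
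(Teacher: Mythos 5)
Your proof is correct and follows essentially the same route as the paper's: part (1) is identical, and in part (2) the key step (using ``strong'' to produce an additive inverse, landing in $\operatorname{Ker}(f)\subseteq N_1$, and then applying subtractivity) is exactly the paper's argument, merely repackaged as the general identity $f^{-1}(f(N_1))=N_1$. Your explicit verification that $f(N_1)$ is proper is a small addition the paper leaves implicit, but the substance is the same.
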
 

\begin{proof}
\begin{enumerate}
    \item[(1)] First, since $\text{Im}(f)\nsubseteq N_2$, $f^{-1}(N_{2})\not= M_1$. Suppose that $abm_{1} \in f^{-1}(N_{2})$, where $a,b$ are nonunits of $S$ and $m_{1} \in M_{1}$.
Then $abf(m_{1})=f(abm_1) \in N_{2}$. Since $N_{2}$ is a $1$-absorbing prime subsemimodule of $M_2$, we have $ab \in (N_{2} :_{S} M_{2})$ or $f(m_{1}) \in N_{2}$. But $(N_{2} :_{S} M_{2}) \subseteq (f^{-1}(N_{2}):_{S} M_{1})$ by Lemma \ref{lem0} part (1). It follows that $ab \in (f^{-1}(N_{2}):_{S} M_{1})$ or $m_{1} \in f^{-1}(N_{2})$.
Thus $f^{-1}(N_{2})$ is a $1$-absorbing prime subsemimodule of $M_{1}$.
\item[(2)] Suppose that $abm_{2} \in f(N_{1})$, where $a,b$ are nonunits of $S$ and $m_{2} \in M_{2}$. Then there exists $m_{1} \in M_{1}$ such that $f(m_{1}) = m_{2}$ since $f$ is onto. So $f(abm_{1})= abm_{2} \in f(N_{1})$. We claim that $abm_{1}\in N_{1}$. Since $f(abm_{1})\in f(N_{1})$, there exists $n_{1}\in N_{1}$ such that $f(abm_{1})=f(n_{1})$. Since $N_{1}$ is a strong subsemimodule, there exists $n_{1}'\in N_{1}$ such that $n_{1}+n_{1}'=0$. Then $f(abm_{1}+n_{1}')=0$. Hence $abm_{1}+n_{1}' \in \text{Ker}(f) \subseteq N_{1}$ but $N_{1}$ is subtractive and $n_{1}'\in N_{1}$, so $abm_{1}\in N_{1}$. Since $N_{1}$ is a $1$-absorbing prime subsemimodule of $M_{1}$, either $ab \in (N_{1} :_{S} M_{1})$ or $m_{1} \in N_{1}$. But $(N_{1} :_{S} M_{1}) \subseteq (f(N_{1}):_{S} M_{2})$ by Lemma \ref{lem0} part (2). So $ab \in ( f (N_{1}) :_{S} M_{2})$ or $m_{2}=f(m_{1}) \in f (N_{1})$. Thus $f(N_{1})$ is a $1$-absorbing prime subsemimodule of $M_{2}$.
\end{enumerate}  
\end{proof}

\begin{proposition}\label{prop1}
 Let $T$ be a nonempty multiplicatively closed subset of $S$, and $N$ a subsemimodule of $M$. If $T^{-1} N\not= T^{-1}M$ and $N$ is $1$-absorbing prime in $M$, then $T^{-1} N$ is $1$-absorbing prime in $T^{-1}M$.    
\end{proposition}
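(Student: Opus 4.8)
The plan is to verify the defining condition of a $1$-absorbing prime subsemimodule directly in $T^{-1}M$, pulling nonunits and membership relations back to $M$ via the canonical map $M \to T^{-1}M$, $m \mapsto \tfrac{m}{1}$. So I would start with nonunits $\tfrac{a}{s}, \tfrac{b}{t} \in T^{-1}S$ and an element $\tfrac{m}{u} \in T^{-1}M$ such that $\tfrac{a}{s}\cdot\tfrac{b}{t}\cdot\tfrac{m}{u} = \tfrac{abm}{stu} \in T^{-1}N$. The first observation I need is that if $\tfrac{a}{s}$ is a nonunit in $T^{-1}S$, then $a$ is a nonunit in $S$: for if $a$ were a unit, $\tfrac{a}{s}$ would be a unit in $T^{-1}S$ (its inverse being $\tfrac{s}{a}$, or more carefully $\tfrac{s a^{-1}}{1}$). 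Hence $a$ and $b$ are nonunits of $S$, which is what lets me invoke the hypothesis on $N$.

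Next I would unpack $\tfrac{abm}{stu} \in T^{-1}N$: there exist $n \in N$ and $v \in T$ with $\tfrac{abm}{stu} = \tfrac{n}{v}$, so by definition of the equivalence relation there is $w \in T$ with $w(v\,abm) = w(stu\,n) \in N$ (since $N$ is a subsemimodule and $stu\,n \in N$). Writing $c = wv \in T$, this says $c\,abm \in N$, i.e. $(ca)(b)(m) \in N$ — wait, $ca$ need not be a nonunit of $S$. To keep both "multiplier" entries nonunits I instead keep $a,b$ as the nonunits and absorb $c$ into $m$: $ab(cm) \in N$. Since $N$ is $1$-absorbing prime and $a,b$ are nonunits, either $ab \in (N :_S M)$ or $cm \in N$. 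In the first case, $\tfrac{a}{s}\cdot\tfrac{b}{t} = \tfrac{ab}{st} \in T^{-1}(N:_S M) \subseteq (T^{-1}N :_{T^{-1}S} T^{-1}M)$, using the standard fact that localization commutes with the residual (or just checking directly that $\tfrac{ab}{st}$ kills $T^{-1}M$ into $T^{-1}N$). In the second case $cm \in N$ gives $\tfrac{m}{u} = \tfrac{cm}{cu} \in T^{-1}N$. Either way the required conclusion holds, and together with the hypothesis $T^{-1}N \neq T^{-1}M$ (properness) this shows $T^{-1}N$ is $1$-absorbing prime in $T^{-1}M$.

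The main obstacle — really the only subtle point — is the bookkeeping with the element $c \in T$ coming out of the equivalence relation: one must be careful to attach $c$ to $m$ rather than to $a$ or $b$, precisely because multiplying a nonunit by an element of $T$ can produce a unit, and the definition of $1$-absorbing prime requires the two factors $a,b$ to remain nonunits. With that routing chosen, everything else is the routine verification that the canonical map $S \to T^{-1}S$ sends nonunits to nonunits and that $T^{-1}$ respects submodule membership and the residual $(\,\cdot : \cdot\,)$; I would state these as one-line observations rather than belaboring them. Properness of $T^{-1}N$ is assumed in the hypothesis, so nothing is needed there.
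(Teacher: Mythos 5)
Your proposal is correct and follows essentially the same route as the paper's proof: unwind the equivalence relation to get $ab(cm)\in N$ for some $c\in T$, observe that $a,b$ remain nonunits in $S$, apply $1$-absorbing primeness, and push the two alternatives back into $T^{-1}M$ via $\tfrac{ab}{st}\in T^{-1}(N:_S M)\subseteq (T^{-1}N:_{T^{-1}S}T^{-1}M)$ and $\tfrac{m}{u}=\tfrac{cm}{cu}$. The bookkeeping point you highlight (attaching $c$ to $m$ rather than to $a$ or $b$) is exactly the choice made in the paper.
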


\begin{proof}
    Suppose that $\frac{a}{t}\frac{b}{t'}\frac{m}{t''}\in T^{-1} N$ for some nonunits $\frac{a}{t},\frac{b}{t'}\in T^{-1} S$ and $\frac{m}{t''}\in T^{-1} M$. So $\frac{abm}{tt't''}\in T^{-1} N$. This implies that $\frac{abm}{tt't''}=\frac{n}{s}$ for some $n\in N$ and $s\in T$. So there exists $v\in T$ such that $vsabm=vtt't''n$. It follows that $abvsm=vsabm\in N$. But since $\frac{a}{t}$ and $\frac{b}{t'}$ are nonunits in $T^{-1} S$, $a$ and $b$ are nonunits in $S$. Since $N$ is $1$-absorbing prime in $M$, we have $ab\in (N:_S M)$ or $vsm\in N$. Thus $\frac{a}{t}\frac{b}{t'}=\frac{ab}{tt'}\in T^{-1}(N:_S M)\subseteq (T^{-1}N:_{T^{-1}S} T^{-1}M)$ or $\frac{m}{t''}=\frac{vsm}{vst''}\in T^{-1} N$. Therefore, $T^{-1} N$ is $1$-absorbing prime in $T^{-1}M$. 
    \end{proof}

\section{Weakly $1$-absorbing prime subsemimodules}
We start this section by defining the concept of weakly $1$-absorbing prime subsemimodules over commutative semirings:
\begin{definition}\label{d2}
    Let $S$ be a commutative semiring and $M$ an $S$-semimodule. A proper subsemimodule $N$ of $M$ is called a weakly $1$-absorbing prime subsemimodule of $M$ if for all nonunits $a,b \in S$ and $m \in M$, $0\not= abm\in N$ implies 
$ab \in (N:_{S} M)$ or $m \in N$.
\end{definition} 

 Clearly, from Definitions \ref{d1} and \ref{d2}, every $1$-absorbing prime subsemimodule is weakly $1$-absorbing prime, but the converse is not true in general, as shown in the following example. 

\begin{example}\label{exwnot1}
    Let $S=\mathbb{Z}^{\circ}$, $M=\mathbb{Z}^{\circ}\times\mathbb{Z}_{8}$, and $N=\{(0,\overline{0})\}$. Then clearly, by definition, $N$ is a weakly $1$-absorbing prime subsemimodule of $M$. However, $N$ is not 1–absorbing prime subsemimodule of $M$ since $2\cdot 2\cdot (0,\overline{2})=(0,\overline{0})\in N$ but $2\cdot 2=4\notin (N:_{S} M)$ and $(0,\overline{2})\notin N$. 
\end{example}

\begin{lemma}\label{lem34}
     Let $M$ be an $S$-semimodule and $N$ a subsemimodule of $M$. If $N=N_{1}\cup N_{2}$, where $N_{1},N_{2}$ are subtractive subsemimodules of $M$, then $N=N_{1}$ or $N=N_{2}$. 
\end{lemma}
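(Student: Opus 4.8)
The plan is to prove the set-theoretic fact that a subsemimodule cannot be a union of two proper subtractive subsemimodules unless it equals one of them. I would argue by contradiction: suppose $N = N_1 \cup N_2$ but $N \neq N_1$ and $N \neq N_2$. Since $N_1, N_2 \subseteq N$, the assumption gives elements $x \in N \setminus N_1$ and $y \in N \setminus N_2$. Because $N = N_1 \cup N_2$, we must have $x \in N_2$ and $y \in N_1$.

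The key step is to examine the element $x + y \in N = N_1 \cup N_2$. If $x + y \in N_1$, then since $y \in N_1$ and $N_1$ is subtractive (with $y$ and $x+y$ both in $N_1$), we conclude $x \in N_1$, contradicting $x \notin N_1$. Symmetrically, if $x + y \in N_2$, then since $x \in N_2$ and $N_2$ is subtractive, $y \in N_2$, contradicting $y \notin N_2$. Either way we reach a contradiction, so $N = N_1$ or $N = N_2$.

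The main subtlety — really the only thing to be careful about — is the exact form of the subtractivity hypothesis: the paper defines $N_i$ subtractive to mean that whenever $x, x+y \in N_i$ then $y \in N_i$. So in the first case I should write it as: $y \in N_1$ and $y + x = x + y \in N_1$ force $x \in N_1$ (matching the definition with the roles $x \leftarrow y$, $y \leftarrow x$); and similarly in the second case. I do not expect any genuine obstacle here; the argument is a three-line pigeonhole-plus-subtractivity computation, and no earlier results from the paper are needed beyond the definition of a subtractive subsemimodule recalled in the introduction.
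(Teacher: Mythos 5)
Your proof is correct and is essentially identical to the paper's: both argue by contradiction, pick $x\in N\setminus N_{1}\subseteq N_{2}$ and $y\in N\setminus N_{2}\subseteq N_{1}$, and apply subtractivity to $x+y\in N_{1}\cup N_{2}$ in the two cases. Your remark about matching the roles of the variables in the definition of subtractive is the right care to take, and no gap remains.
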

\begin{proof}
    Assume that $N\not= N_{1}$ and $N\not= N_{2}$. Take $x_{1}\in N\setminus N_{1}$ and $x_{2}\in N\setminus N_{2}$. Hence $x_{1}\in N_{2}$ and $x_{2}\in N_{1}$. Now $x_{1}+x_{2}\in N=N_{1}\cup N_{2}$. If $x_{1}+x_{2}\in N_{1}$, then since $x_{2}\in N_{1}$ and $N_{1}$ is subtractive, we have $x_{1}\in N_{1}$, a contradiction. Similarly, if $x_{1}+x_{2}\in N_{2}$, then since $x_{1}\in N_{2}$ and $N_{2}$ is subtractive, we have $x_{2}\in N_{2}$, a contradiction. Thus $N=N_{1}$ or $N=N_{2}$.
\end{proof}

The following theorem gives some characterizations of weakly $1$-absorbing prime subtractive subsemimodules.

\begin{thm}\label{th3} 
    Let $M$ be an $S$-semimodule and $N$ a proper subtractive subsemimodule of $M$. Then the following statements are equivalent:
    \begin{enumerate}
        \item[(1)] $N$ is a weakly $1$-absorbing prime subsemimodule of $M$.
  \item[(2)] For all nonunits $a,b\in S$ with $ab \notin (N:_{S} M)$, $(N :_{M} ab) = (0:_{M} ab) \cup N$.
  \item[(3)] For all nonunits $a,b\in S$ with $ab \notin (N:_{S} M)$, $(N :_{M} ab) = (0:_{M} ab)$ or $(N :_{M} ab) = N$.
  \item[(4)] For all nonunits $a,b\in S$ and subsemimodule $K$ of $M$ with $0\not=abK \subseteq N$, then $ab\in (N:_S M)$ or $K\subseteq N$.
  \item[(5)] For all nonunit $a\in S$, proper ideal $J$ of $S$, and subsemimodule $K$ of $M$ with $0\not=aJK \subseteq N$, either $aJ \subseteq (N:_{S} M)$ or $K \subseteq N$.
   \item[(6)] For all proper ideals $I,J$ of $S$, and subsemimodule $K$ of $M$ with $0\not=IJK \subseteq N$, either $IJ \subseteq (N:_{S} M)$ or $K \subseteq N$.
    \end{enumerate}
\end{thm}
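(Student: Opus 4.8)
The plan is to prove Theorem \ref{th3} by establishing the cycle of implications
$(1)\Rightarrow(2)\Rightarrow(3)\Rightarrow(4)\Rightarrow(1)$ and then $(4)\Rightarrow(5)\Rightarrow(6)\Rightarrow(4)$, closely paralleling the proof of Theorem \ref{th1} but carrying along the ``$0\neq$'' hypothesis and exploiting subtractivity of $N$ together with Lemma \ref{lem34}. The first few implications are the module-theoretic core; the remaining three are the ideal-reduction steps, which are essentially formal.

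\textbf{The element-level equivalences.} For $(1)\Rightarrow(2)$, fix nonunits $a,b$ with $ab\notin(N:_SM)$. The inclusion $(0:_Mab)\cup N\subseteq(N:_Mab)$ is immediate. For the reverse, take $m\in(N:_Mab)$, so $abm\in N$; if $abm\neq0$, weak $1$-absorbing primeness and $ab\notin(N:_SM)$ force $m\in N$, while if $abm=0$ then $m\in(0:_Mab)$. For $(2)\Rightarrow(3)$, note that $(0:_Mab)$ and $N$ are both subtractive subsemimodules of $M$ (for $(0:_Mab)$ this is routine: if $abx=0$ and $ab(x+y)=0$ then $aby=0$ since $M$... actually one must be slightly careful here, since $M$ need not be cancellative, but $abx=0$ and $abx+aby=0$ gives $aby=0$ only if $0$ is subtractive in $M$, which holds because $M$ is an $S$-semimodule and $\{0\}$ is subtractive---one checks $abx=0,\ abx+aby=0\Rightarrow aby=0$ directly). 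Also $(N:_Mab)$ is a subsemimodule. So $(N:_Mab)=(0:_Mab)\cup N$ is a union of two subtractive subsemimodules and therefore equals one of them by Lemma \ref{lem34}. For $(3)\Rightarrow(4)$, suppose $0\neq abK\subseteq N$ with $ab\notin(N:_SM)$; then $K\subseteq(N:_Mab)$, and since $abK\neq0$ we have $K\not\subseteq(0:_Mab)$, so by $(3)$ we must have $(N:_Mab)=N$, giving $K\subseteq N$. For $(4)\Rightarrow(1)$, given $0\neq abm\in N$ apply $(4)$ with $K=Sm$: one needs $0\neq ab(Sm)=S(abm)\subseteq N$, which holds because $abm\neq0$, and then $ab\in(N:_SM)$ or $m\in Sm\subseteq N$.

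\textbf{The ideal-reduction steps.} For $(4)\Rightarrow(5)$, let $a$ be a nonunit, $J$ a proper ideal, $K$ a subsemimodule with $0\neq aJK\subseteq N$, and suppose $aJ\not\subseteq(N:_SM)$; then there is a nonunit $b\in J$ (every element of the proper ideal $J$ is a nonunit) with $ab\notin(N:_SM)$, and $abK\subseteq aJK\subseteq N$. If $abK\neq0$ we conclude $K\subseteq N$ by $(4)$; if $abK=0$ we would need a separate argument---this is the one genuinely delicate point, so instead I would argue directly: since $aJK\neq0$ there exist $b_0\in J$ and $k_0\in K$ with $ab_0k_0\neq0$, and combining this $b_0$ with the $b$ above (replacing $b$ by $b+b_0$, still a nonunit in $J$ since $S$ localness is not assumed here---hmm, this fails). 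The cleaner route: pick $b\in J$ with $ab\notin(N:_SM)$ and separately $b'\in J$, $k'\in K$ with $ab'k'\neq0$; then look at $a(b+b')$. Rather than belabor this, I would instead run $(5)$ and $(6)$ by first proving $(4)\Rightarrow(6)$ in the style of Theorem \ref{th1}'s $(3)\Rightarrow(4)$ step, handling the nonzero condition by choosing the witnessing nonunits $a\in I$, $b\in J$ to also witness $abK\neq0$ (possible because $0\neq IJK$ means some product $a_0b_0k_0\neq0$, and if $a_0b_0\in(N:_SM)$ one adjusts), then get $(6)\Rightarrow(5)\Rightarrow(4)$ trivially by specialization ($I=Sa$, etc.).

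\textbf{Main obstacle.} The routine calculations (subtractivity of $(0:_Mab)$, applying Lemma \ref{lem34}) are straightforward. The real subtlety is the interaction between the ``$0\neq$'' hypothesis and passing from ideals/subsemimodules back to single elements: when $0\neq IJK\subseteq N$ we must locate nonunits $a\in I$, $b\in J$ and the subsemimodule $K$ so that simultaneously $abK\neq0$ and $ab\notin(N:_SM)$, since a careless choice might make $abK=0$ and lose the hypothesis. I expect this ``simultaneous witness'' selection---showing we can always find witnesses that preserve nonvanishing---to be the step requiring the most care, likely resolved by a short case analysis adding a nonvanishing witness to a non-annihilating witness (both living in the proper ideal, hence both nonunits, and their sum still a nonunit when $S$ is local, which by Theorem \ref{thmlocal} we may invoke whenever $N$ is not already $1$-absorbing prime---and if $N$ is $1$-absorbing prime the theorem reduces to Theorem \ref{th1}).
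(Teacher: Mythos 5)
Your cycle $(1)\Rightarrow(2)\Rightarrow(3)\Rightarrow(4)\Rightarrow(1)$ is correct and is essentially the paper's argument (the paper closes the loop via $(6)\Rightarrow(1)$ instead of $(4)\Rightarrow(1)$, but that is immaterial). The problem is the ideal-reduction steps, where you stall at exactly the point you flag and then offer only sketches that do not close. The observation you talked yourself out of is in fact the key: if $b\in J$ satisfies $ab\notin(N:_SM)$ and $c\in J$ satisfies $acK\neq0$, then $b+c$ lies in $J$ because $J$ is an ideal, and \emph{every} element of a proper ideal is a nonunit --- no localness of $S$ is needed, so your ``hmm, this fails'' is mistaken and the direct argument goes through. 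Concretely (this is the paper's $(4)\Rightarrow(5)$): assuming $aJ\not\subseteq(N:_SM)$ and $K\not\subseteq N$, one first shows $abK=0$ (otherwise $(4)$ forces $K\subseteq N$); then $0\neq acK=a(b+c)K\subseteq N$ with $b+c$ a nonunit gives $a(b+c)\in(N:_SM)$ by $(4)$, and $0\neq acK\subseteq N$ gives $ac\in(N:_SM)$; finally, since $N$ is subtractive, $(N:_SM)$ is a subtractive ideal, so from $ab+ac\in(N:_SM)$ and $ac\in(N:_SM)$ one extracts $ab\in(N:_SM)$, a contradiction. The step $(5)\Rightarrow(6)$ is the same argument one level up. Note that this is where the standing hypothesis that $N$ is subtractive earns its keep a second time (beyond Lemma \ref{lem34}); your proposal never uses subtractivity of $(N:_SM)$, and without it the witness-combination argument cannot be completed.

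Your two fallback routes do not repair this. The phrase ``if $a_0b_0\in(N:_SM)$ one adjusts'' is precisely the missing argument, not a proof of it. And the appeal to Theorem \ref{thmlocal} is not available: that theorem concerns a $1$-absorbing prime subsemimodule that is not \emph{prime}, whereas here $N$ is only weakly $1$-absorbing prime and possibly not $1$-absorbing prime at all; no result in the paper derives localness of $S$ from that situation, so you cannot assume nonunits are closed under addition in general. The correct fix is the one above: stay inside the proper ideal $J$ (respectively $I$), where sums of witnesses remain nonunits for free, and use subtractivity of $(N:_SM)$ to cancel the unwanted summand.
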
 

\begin{proof}
 (1) $\Rightarrow$ (2). Let $a,b\in S$ be nonunits such that $ab \notin (N:_{S} M)$.
Let $m \in (N :_{M} ab)$ and suppose $m\notin (0:_{M} ab)$. Then $0\not=abm \in N$. But $N$ is weakly $1$-absorbing prime subsemimodule and $ab \notin (N:_{S} M)$, so $m \in N$. Thus $(N :_{M} ab) \subseteq (0:_{M} ab) \cup N$. The reverse inclusion is always true.

(2) $\Rightarrow$ (3). This implication follows from Lemma \ref{lem34} and the fact that the subsemimodules $(0:_{M} ab)$ and $N$ are subtractive.

(3) $\Rightarrow$ (4). Assume that $0\not=abK \subseteq N$ for some nonunits $a,b\in S$ and some subsemimodule $K$ of $M$ and assume that $ab \notin (N:_{S} M)$. Since $abK \subseteq N$, $K \subseteq (N :_{M} ab)$. So by (3), $K\subseteq (0:_{M} ab)$ or $K\subseteq N$. But $K\nsubseteq (0:_{M} ab)$ since $abK\not = 0$. Thus $K\subseteq N$.

(4) $\Rightarrow$ (5). Suppose that $0\not=aJK \subseteq N$, where $a\in S$ is a nonunit, $J$ a proper ideal of $S$, and $K$ a subsemimodule of $M$. Suppose that $aJ \nsubseteq (N:_{S} M)$ and $K\nsubseteq N$. Then $ab\notin (N:_{S} M)$ for some $b\in J$. Since $aJK\not=0$, there exists $c\in J$ such that $acK\not=0$. Since $J$ is proper, $b,c,b+c$ are nonunits. We claim that $abK=0$. If $abK\not=0$, then $0\not=abK\subseteq aJK\subseteq N$. So by $(4)$ and since $ab\notin (N:_{S} M)$, we have $K\subseteq N$, a contradiction. Thus $abK=0$. Now $0\not= acK=a(b+c)K\subseteq aJK\subseteq N$. Again by (4) and since $K\nsubseteq N$, we have $a(b+c)\in (N:_S M)$. But since $0\not=acK\subseteq aJK\subseteq N$ and $K\nsubseteq N$, then $ac\in (N:_S M)$. But $a(b+c)\in (N:_S M)$ and $(N:_S M)$ is subtractive, so $ab\in (N:_S M)$, a contradiction. 

(5) $\Rightarrow$ (6). Suppose that $0\not=IJK \subseteq N$ for some proper ideals $I,J$ of $S$ and some subsemimodule $K$ of $M$. Suppose that $IJ \nsubseteq (N:_{S} M)$ and $K \nsubseteq N$. Then $aJ\nsubseteq (N:_{S} M)$ for some $a\in I$. If $aJK\not=0$, then $0\not=aJK\subseteq IJK\subseteq N$. So by (5) and since $aJ\nsubseteq (N:_{S} M)$, we have $K\subseteq N$, a contradiction. Thus $aJK=0$. Now since $IJK\not=0$, there exists $b\in I$ such that $bJK\not=0$. So $0\not=bJK\subseteq IJK\subseteq N$. By (5) and since $K\nsubseteq N$, we have $bJ\subseteq  (N:_{S} M)$. But $0\not=bJK=(a+b)JK\subseteq IJK\subseteq N$ and $K\nsubseteq N$, then again by (5), $(a+b)J\subseteq (N:_{S} M)$. Since $(N:_S M)$ is subtractive, $aJ\subseteq (N:_{S} M)$, a contradiction.
   
(6) $\Rightarrow$ (1). Assume that $0\not=abm \in N$ for some nonunits $a,b\in S$ and some $m\in M$. Take $I=Sa$, $J=Sb$, and $K = Sm$. Then $0\not=abm\in IJK=Sabm \subseteq N$. Thus by (6), $ab \in IJ\subseteq (N:_{S} M)$ or $m\in K\subseteq N$. Thus $N$ is a weakly $1$-absorbing prime subsemimodule of $M$.
\end{proof}	

 \begin{thm}
   Let $M$ be a multiplication $S$-semimodule and $N$ a proper subtractive subsemimodule of $M$. Then the following statements are equivalent:
   \begin{enumerate}
    \item[(1)] $N$ is a weakly $1$-absorbing prime subsemimodule of $M$.
  \item[(2)] For all subsemimodules $N_1,N_2,N_3$ of $M$ with $0\not=N_1N_2N_3\subseteq N$, either $N_1N_2 \subseteq N$ or $N_3 \subseteq N$.
    \end{enumerate}   
 \end{thm}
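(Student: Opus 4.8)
The plan is to adapt the proof of Theorem~\ref{th2N} almost verbatim, replacing each use of Theorem~\ref{th1} by the corresponding use of the equivalence $(1)\Leftrightarrow(6)$ of Theorem~\ref{th3}, and carrying the extra hypothesis $0\neq N_1N_2N_3$ (respectively $0\neq IJK$) along through every step. Throughout I would use the standard facts about a multiplication $S$-semimodule $M$: every subsemimodule $L$ of $M$ satisfies $L=(L:_SM)M$; the product of two subsemimodules is $L_1L_2=(L_1:_SM)(L_2:_SM)M$, so that $LM=L$; and $L_1L_2\subseteq N$ is equivalent to $(L_1:_SM)(L_2:_SM)\subseteq(N:_SM)$.

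For $(1)\Rightarrow(2)$ I would begin with subsemimodules $N_1,N_2,N_3$ of $M$ satisfying $0\neq N_1N_2N_3\subseteq N$, write $N_i=I_iM$ with $I_i=(N_i:_SM)$, and note that then $0\neq I_1I_2(I_3M)\subseteq N$; applying Theorem~\ref{th3}, $(1)\Rightarrow(6)$, with $I=I_1$, $J=I_2$, $K=I_3M$ gives $I_1I_2\subseteq(N:_SM)$ or $I_3M\subseteq N$, that is, $N_1N_2\subseteq N$ or $N_3\subseteq N$. For $(2)\Rightarrow(1)$ it suffices, by Theorem~\ref{th3}, $(6)\Rightarrow(1)$, to verify condition $(6)$: given proper ideals $I_1,I_2$ of $S$ and a subsemimodule $K$ of $M$ with $0\neq I_1I_2K\subseteq N$, write $K=I_3M$ and put $N_1=I_1M$, $N_2=I_2M$, $N_3=K$, so that $0\neq N_1N_2N_3=I_1I_2I_3M\subseteq N$; then $(2)$ yields $N_1N_2\subseteq N$, whence $I_1I_2M\subseteq N$ and so $I_1I_2\subseteq(N:_SM)$, or else $N_3=K\subseteq N$, which is exactly condition $(6)$. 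Thus $N$ is weakly $1$-absorbing prime.

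The point where the argument stops being purely formal, and which I expect to be the main obstacle, is making sure that the ideals to which Theorem~\ref{th3}$(6)$ is applied are \emph{proper}. In $(1)\Rightarrow(2)$ this forces a separate treatment of the degenerate case in which $(N_i:_SM)=S$ for some $i$, that is, some $N_i=M$: for instance, if $N_3=M$ then $N_3\not\subseteq N$ because $N$ is proper, while $N_1N_2N_3=N_1N_2$, so the hypothesis already reads $N_1N_2\subseteq N$; the cases $N_1=M$ or $N_2=M$ reduce, via $LM=L$, to a statement about a product of only two subsemimodules, and this reduced two-factor situation is the delicate part that must be analysed directly. The subtractivity of $N$ needs no additional work here, since it enters only through the statement of Theorem~\ref{th3}.
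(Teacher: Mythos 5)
Your main argument is exactly the paper's: the published proof of this theorem is the single line ``Follows from Theorem \ref{th3} and the proof of Theorem \ref{th2N}'', i.e., precisely the substitution of the equivalence (1)$\Leftrightarrow$(6) of Theorem \ref{th3} for Theorem \ref{th1} inside the proof of Theorem \ref{th2N} that you describe. The direction (2)$\Rightarrow$(1) and the non-degenerate part of (1)$\Rightarrow$(2) go through as you say.

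The obstacle you flag at the end, however, is not a loose end that a ``direct analysis'' will close; it is fatal to the statement as literally written (and equally to Theorem \ref{th2N} and Corollary \ref{corg}, whose proofs in the paper also apply Theorem \ref{th1}(4) without checking that the ideals $I_1,I_2$ are proper). If $N_1=M$, then writing $M=I_1M$ in general forces $I_1=S$, Theorem \ref{th3}(6) does not apply, and the two-factor statement your reduction needs --- $0\neq N_2N_3\subseteq N$ implies $N_2\subseteq N$ or $N_3\subseteq N$ --- is strictly stronger than weak $1$-absorbing primeness. Concretely, take $S=M=\mathbb{Z}_{8}$ and $N=4M=\{0,4\}$, a proper subtractive subsemimodule of the multiplication semimodule $M$. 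Every product of two nonunits of $S$ lies in $4S=(N:_{S}M)$, so $N$ is $1$-absorbing prime and hence weakly $1$-absorbing prime. But with $N_1=M$ and $N_2=N_3=2M$ one has $0\neq N_1N_2N_3=4M\subseteq N$, while $N_1N_2=2M\nsubseteq N$ and $N_3=2M\nsubseteq N$. So (1) does not imply (2) unless $N_1$ and $N_2$ are required to be proper (the case $N_3=M$ is harmless, as you observed, since then the hypothesis already reads $N_1N_2\subseteq N$). With that properness restriction added to condition (2), your proof --- and the paper's --- is complete; without it, the case you deferred cannot be proved.
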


 \begin{proof}
    Follows from Theorem \ref{th3} and the proof of Theorem \ref{th2N}.
 \end{proof}

\begin{thm}\label{thmjm2}
    Let $M$ be an $MC$ multiplication $S$-semimodule and $I$ a weakly $1$-absorbing prime ideal of $S$. If $0\not=abx\in IM$, then $ab \in I$ or $x\in IM$ for all nonunits $a,b \in S$ and $x\in M$.
\end{thm}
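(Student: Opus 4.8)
The plan is to imitate the proof of Theorem~\ref{thmjm} almost verbatim; the one genuinely new point is that the weakly $1$-absorbing prime hypothesis on $I$ may be invoked only for a product that is known to be nonzero, so the crux is certifying such a nonvanishing. Fix nonunits $a,b\in S$ and $x\in M$ with $0\neq abx\in IM$; in particular $x\neq0$. We may assume $ab\notin I$ (otherwise there is nothing to prove) and must show $x\in IM$. Put $J=\{s\in S\mid sx\in IM\}$, an ideal of $S$; since $x\in IM$ is equivalent to $1\in J$, it suffices to prove $J=S$. Suppose to the contrary that $J\neq S$, and choose a maximal ideal $\mathfrak{m}$ of $S$ with $J\subseteq\mathfrak{m}$. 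Since $M$ is $MC$, the set $\{z\in M\mid z=qz\text{ for some }q\in\mathfrak{m}\}$ equals $\{0\}$; as $x\neq0$ and $M$ is a multiplication semimodule, Lemma~\ref{lemjm} rules out the first alternative and forces $M$ to be $\mathfrak{m}$-cyclic. Hence there are $s\in S$, $p\in\mathfrak{m}$ and $y\in M$ with $s+p=1$ and $sM\subseteq Sy$.

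I would then carry out the computation of Theorem~\ref{thmjm}. Write $sx=ty$ for some $t\in S$. First, $y\neq0$: if $y=0$, then $sx=0$ and $x=(s+p)x=px$ with $p\in\mathfrak{m}$, contradicting $x\neq0$ and the $MC$ property. Next, from $abx\in IM$ we get $sabx=s(abx)\in sIM=IsM\subseteq ISy=Iy$, say $sabx=iy$ with $i\in I$; on the other hand $sabx=ab(sx)=ab(ty)=(tab)y$. As $y\neq0$ and $M$ is $MC$, it follows that $tab=i\in I$. Now comes the new ingredient: from $s+p=1$ we have the purely additive identity $abx=(s+p)(abx)=sabx+p(abx)$, so if $sabx=0$ then $abx=p(abx)$ with $p\in\mathfrak{m}$, and applying multiplicative cancellation to $abx\neq0$ would give $1=p\in\mathfrak{m}$, which is impossible. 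Hence $sabx\neq0$, and therefore $tab\neq0$ because $sabx=(tab)y$. Thus $0\neq abt\in I$ with $a,b$ nonunits.

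To conclude, if $t$ were a unit then $ab=t^{-1}(tab)\in I$, contradicting $ab\notin I$, so $t$ is a nonunit; then $a,b,t$ are all nonunits with $0\neq abt\in I$, and since $I$ is weakly $1$-absorbing prime while $ab\notin I$, we obtain $t\in I$. Consequently $sx=ty\in IM$, so $s\in J\subseteq\mathfrak{m}$, whence $1=s+p\in\mathfrak{m}$ --- a contradiction. Therefore $J=S$ and $x\in IM$, which together with the case $ab\in I$ proves the theorem. I expect the only real obstacle beyond transcribing the argument of Theorem~\ref{thmjm} to be this nonvanishing step, where one exploits that $s+p=1$ still produces the additive decomposition $abx=sabx+p(abx)$ over a semiring and then uses $MC$ on the nonzero element $abx$ to exclude $sabx=0$, thereby supplying the nonzero product $abt\in I$ required to apply the weakly $1$-absorbing prime hypothesis.
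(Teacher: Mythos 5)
Your proof is correct and follows essentially the same route as the paper's: the ideal $J$, the reduction to $\mathfrak{m}$-cyclicity via Lemma \ref{lemjm}, the $MC$ cancellation producing an element of $I$, the use of $1=s+p$ together with $abx\neq 0$ to certify that this element is nonzero, and the final application of the weakly $1$-absorbing prime hypothesis. Your only (harmless) deviation is working with $sabx=(abt)y$ rather than the paper's $s^{2}abx=(abst)y$, which yields $t\in I$ directly and lets you place $s$ in $J$ without invoking the primeness of $\mathfrak{m}$.
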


\begin{proof}
    Let $a,b \in S$ be nonunits and $x\in M$. Suppose that $0\not=abx\in IM$ and $ab\notin I$. Let $J=\{s\in S\mid sx\in IM\}$. If $J\not= S$, then there is a maximal ideal $\mathfrak{m}$ of $S$ such that $J\subseteq \mathfrak{m}$. Since $M$ is an $MC$ multiplication $S$-semimodule, then as in the proof of Theorem \ref{thmjm}, $M$ is $\mathfrak{m}$-cyclic. So there exist $s\in S$, $p \in \mathfrak{m}$, and $y\in M$ such that $s+p=1$ and $sM \subseteq Sy$. It follows that $sx=ty$ for some $t\in S$. Again, by the proof of Theorem \ref{thmjm}, we have $y\not=0$ and $staby=iy$ for some $i\in I$. Since $M$ is $MC$, $stab=i\in I$. If $i=0$, then $staby=0$ implies $staby+ptaby=taby$ and so $ptaby=taby$. If $taby\not=0$, then since $M$ is $MC$, we have $p=1\in \mathfrak{m}$, a contradiction. So $taby=0$. But then $sabx=taby=0$. Since $abx\not=0$ and $M$ is $MC$, so $s=0$ and hence $p=1\in \mathfrak{m}$, again a contradiction. Thus $i\not=0$. So $0\not=i=stab\in I$. Since $I$ is a weakly $1$-absorbing prime ideal of $S$, then by a similar argument to the proof of Theorem \ref{thmjm}, we obtain $x\in IM$.  
\end{proof}

\begin{thm}\label{thmIM2}
    Let $M$ be an $MC$ multiplication $S$-semimodule and $I$ be a proper ideal of $S$ such that $IM$ is subtractive. Then the following statements are equivalent:  
    \begin{enumerate}
        \item[(1)] $IM$ is a weakly $1$-absorbing prime subsemimodule of $M$.
        \item[(2)] $I$ is a weakly $1$-absorbing prime ideal of $S$.
    \end{enumerate}
    \end{thm}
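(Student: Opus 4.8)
The plan is to mimic the structure of the proof of Theorem~\ref{thmIM}, substituting the weak hypotheses and the corresponding ``weak'' auxiliary results established earlier in this section. Throughout, recall that since $M$ is an $MC$ multiplication $S$-semimodule, \cite[Theorem~9]{NG} gives $IM \neq M$, so $IM$ is indeed a proper subsemimodule and the equivalence is not vacuous; moreover \cite[Theorem~9]{NG} yields the key arithmetic fact that for any $s \in S$, $sM \subseteq IM$ forces $s \in Ss \subseteq I$, which is what lets us pass between ideals of $S$ and subsemimodules of $M$.

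For the implication $(1) \Rightarrow (2)$, I would take nonunits $a,b,c \in S$ with $0 \neq abc \in I$ and show $ab \in I$ or $c \in I$. From $abc \in I$ we get $abcM \subseteq IM$; I need $abcM \neq 0$ to apply the weak hypothesis, which holds because $0 \neq abc$ and $M$ is faithful — indeed $\mathrm{Ann}(M) = 0$ for an $MC$ semimodule (if $sM = 0$ then $sx = 0 = 0x$ for any $x \neq 0$, forcing $s = 0$). So $0 \neq abcM \subseteq IM$, and since $IM$ is weakly $1$-absorbing prime, Theorem~\ref{th3} (applied with $K = cM$, using that $IM$ is subtractive) gives $ab \in (IM :_S M)$ or $cM \subseteq IM$; that is, $abM \subseteq IM$ or $cM \subseteq IM$, hence by \cite[Theorem~9]{NG}, $ab \in I$ or $c \in I$.

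For $(2) \Rightarrow (1)$, I would take nonunits $a,b \in S$ and $x \in M$ with $0 \neq abx \in IM$. Apply Theorem~\ref{thmjm2}, which is precisely the technical engine built for this direction: it gives $ab \in I$ or $x \in IM$. In the first case $abM \subseteq IM$, i.e. $ab \in (IM :_S M)$, and we are done; in the second case we are already done. Hence $IM$ is weakly $1$-absorbing prime by Definition~\ref{d2}.

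The main obstacle is the bookkeeping around the hypothesis ``$0 \neq (\cdot)$'': one must be careful that the nonzero product in $S$ actually produces a \emph{nonzero} product of subsemimodules inside $M$, which is where faithfulness (a consequence of the $MC$ condition) is essential — without it, Theorem~\ref{th3} cannot be invoked in the $(1)\Rightarrow(2)$ step. Beyond that, the argument is a routine transfer via \cite[Theorem~9]{NG} together with the already-proved Theorems~\ref{th3} and~\ref{thmjm2}, so no genuinely new idea is needed.
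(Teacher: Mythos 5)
Your proposal is correct and follows essentially the same route as the paper's proof: the forward direction uses faithfulness of an $MC$ semimodule to get $0\neq abcM\subseteq IM$, then Theorem~\ref{th3} (with $K=cM$, using subtractivity of $IM$) and \cite[Theorem 9]{NG}, while the reverse direction is a direct application of Theorem~\ref{thmjm2}. No substantive differences.
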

\begin{proof} 
First, since $M$ is an $MC$ multiplication $S$-semimodule, then by \cite[Theorem 9]{NG}, $IM\not= M$. \\
$(1)\Rightarrow (2)$. Suppose that $0\not=abc\in I$ for some nonunits $a,b,c\in S$. Since $M$ is $MC$, $M$ is faithful \cite{NG}. So $\text{Ann}(M)=0$. If $abcM=0$, then $abc\in \text{Ann}(M)=0$, a contradiction. So $0\not=abcM\subseteq IM$. Since $IM$ is subtractive, then by (1) and Theorem \ref{th3}, $ab \in (IM:_{S} M)$ or $cM \subseteq IM$. So $abM\subseteq IM$ or $cM\subseteq IM$. Then by \cite[Theorem 9]{NG}, we have $ab\in I$ or $c\in I$. Therefore, (2) holds.  \\
$(2)\Rightarrow (1)$. Suppose that $0\not=abx\in IM$ for some nonunits $a,b\in S$ and $x\in M$. By Theorem \ref{thmjm2}, we have $ab \in I$ or $x\in IM$. So $abM\subseteq IM$ or $x\in IM$. Hence $ab\in (IM:_{S} M)$ or $x\in IM$. Therefore, (1) holds.
\end{proof}

\begin{corollary}\label{cormc2}
      Let $M$ be an $MC$ multiplication $S$-semimodule and $N$ be a proper subtractive subsemimodule of $M$. Then  
    \begin{enumerate}
        \item[(1)] $N$ is a weakly $1$-absorbing prime subsemimodule of $M$ if and only if $(N :_S M)$ is a weakly $1$-absorbing prime ideal of $S$.
        \item[(2)] $N$ is a weakly $1$-absorbing prime subsemimodule of $M$ if and only if $N = IM$ for some weakly $1$-absorbing prime ideal $I$ of $S$. 
    \end{enumerate} 
\end{corollary}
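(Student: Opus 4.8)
The plan is to reduce both statements to Theorem \ref{thmIM2} by means of the structural identity for multiplication semimodules recalled just before Theorem \ref{th2N}: if $K$ is any subsemimodule of a multiplication $S$-semimodule $M$, then $K = (K :_S M)M$ (see \cite{NG}). Everything else is bookkeeping of the properness and subtractivity hypotheses.

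First, for part (1), I would set $I = (N :_S M)$. Since $N$ is proper, $I$ is a proper ideal of $S$: if $I = S$ then $M = 1\cdot M \subseteq N$, contradicting properness. Because $M$ is a multiplication semimodule, $N = IM$, and hence $(IM :_S M) = (N :_S M) = I$; moreover $IM = N$ is subtractive by hypothesis. Thus $I$ satisfies the standing hypotheses of Theorem \ref{thmIM2}, which then gives exactly: $N = IM$ is a weakly $1$-absorbing prime subsemimodule of $M$ if and only if $I = (N :_S M)$ is a weakly $1$-absorbing prime ideal of $S$. This is (1).

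For part (2), the forward direction is immediate from (1): if $N$ is weakly $1$-absorbing prime, put $I := (N :_S M)$; then $I$ is a weakly $1$-absorbing prime ideal by (1), and $N = IM$ since $M$ is a multiplication semimodule. Conversely, assume $N = IM$ for some weakly $1$-absorbing prime ideal $I$ of $S$. Then $I$ is proper (weakly $1$-absorbing prime ideals are proper by definition) and $IM = N$ is subtractive by hypothesis, so Theorem \ref{thmIM2} applies and yields that $N = IM$ is a weakly $1$-absorbing prime subsemimodule of $M$.

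The only step that requires a moment of care is the identity $(IM :_S M) = I$ invoked in part (1): this fails for arbitrary ideals $I$, but holds for $I = (N :_S M)$ precisely because $N = (N :_S M)M$ in a multiplication semimodule, so that $\big((N :_S M)M :_S M\big) = (N :_S M)$. I do not expect any genuine obstacle beyond this and the routine verification that the properness and subtractivity conditions needed for Theorem \ref{thmIM2} are inherited from $N$ being a proper subtractive subsemimodule.
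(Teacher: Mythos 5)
Your proposal is correct and matches the paper's intended route: the paper states this corollary without proof as an immediate consequence of Theorem \ref{thmIM2} combined with the identity $N=(N:_S M)M$ for multiplication semimodules, which is exactly the reduction you carry out. Your bookkeeping of properness and subtractivity is accurate (and in part (1) the identity $(IM:_S M)=I$ is not even needed, since Theorem \ref{thmIM2} already relates $IM$ directly to $I$).
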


\begin{proposition}
 Let $T$ be a nonempty multiplicatively closed subset of $S$, and $N$ a subsemimodule of $M$. If $T^{-1} N\not= T^{-1}M$ and $N$ is a weakly $1$-absorbing prime in $M$, then $T^{-1} N$ is a weakly $1$-absorbing prime in $T^{-1}M$.    
\end{proposition}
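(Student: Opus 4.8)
The plan is to follow almost verbatim the proof of Proposition \ref{prop1}, the only new ingredient being that the hypothesis $0 \neq \frac{a}{t}\frac{b}{t'}\frac{m}{t''}$ in $T^{-1}M$ must be transferred to a genuinely nonzero product in $M$ so that the \emph{weakly} $1$-absorbing prime property of $N$ (rather than the $1$-absorbing prime property) can be applied. Note first that the hypothesis $T^{-1}N \neq T^{-1}M$ guarantees $T^{-1}N$ is a proper subsemimodule, which is needed for the conclusion.

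Concretely, I would take nonunits $\frac{a}{t}, \frac{b}{t'} \in T^{-1}S$ and $\frac{m}{t''} \in T^{-1}M$ with $0 \neq \frac{a}{t}\frac{b}{t'}\frac{m}{t''} = \frac{abm}{tt't''} \in T^{-1}N$. Exactly as in Proposition \ref{prop1}, writing $\frac{abm}{tt't''} = \frac{n}{s}$ with $n \in N$, $s \in T$ produces $v \in T$ with $vs\,abm = v tt't''\, n$, hence $ab(vsm) = vs\,abm = v tt't''\, n \in N$; moreover $a$ and $b$ are nonunits of $S$ since $\frac{a}{t}$ and $\frac{b}{t'}$ are nonunits of $T^{-1}S$. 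The key extra step is to observe that $\frac{abm}{tt't''} = \frac{ab(vsm)}{vs\,tt't''}$ in $T^{-1}M$, so if $ab(vsm) = 0$ then $\frac{abm}{tt't''} = \frac{0}{vs\,tt't''} = 0$, contradicting our assumption; therefore $0 \neq ab(vsm) \in N$. Now $N$ weakly $1$-absorbing prime gives $ab \in (N :_S M)$ or $vsm \in N$, whence $\frac{ab}{tt'} \in T^{-1}(N :_S M) \subseteq (T^{-1}N :_{T^{-1}S} T^{-1}M)$ or $\frac{m}{t''} = \frac{vsm}{vst''} \in T^{-1}N$. This shows $T^{-1}N$ is weakly $1$-absorbing prime in $T^{-1}M$.

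I do not expect a genuine obstacle here: the argument is routine bookkeeping with fractions. The one point requiring a moment's care is the verification that $ab(vsm) \neq 0$, which rests on the elementary facts that $\tfrac{0}{u} = 0$ in any localization and that rescaling a representative by an element of $T$ does not change its equivalence class; everything else is identical to Proposition \ref{prop1}.
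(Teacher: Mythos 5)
Your proof is correct and follows essentially the same route as the paper's: reduce to $ab(vsm)\in N$ as in Proposition \ref{prop1}, observe that $ab(vsm)=0$ would force $\frac{abm}{tt't''}=0$ in $T^{-1}M$ (the paper states the contrapositive: since $\frac{abm}{tt't''}\neq 0$, the element $uabm$ of $N$ is nonzero), and then apply the weakly $1$-absorbing prime hypothesis. No gaps.
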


\begin{proof}
    Suppose that $0\not=\frac{a}{t}\frac{b}{t'}\frac{m}{t''}\in T^{-1} N$ for some nonunits $\frac{a}{t},\frac{b}{t'}\in T^{-1} S$ and $\frac{m}{t''}\in T^{-1} M$. So $0\not=\frac{abm}{tt't''}\in T^{-1} N$. As in the proof of Proposition \ref{prop1}, $uabm\in N$ for some $u\in T$. Since $\frac{abm}{tt't''}\not=0$, then $uabm\not=0$. So we have $0\not=uabm\in N$. But since $\frac{a}{t}$ and $\frac{b}{t'}$ are nonunits in $T^{-1} S$, $a$ and $b$ are nonunits in $S$. By hypothesis, $ab\in (N:_S M)$ or $um\in N$. Thus $\frac{a}{t}\frac{b}{t'}=\frac{ab}{tt'}\in T^{-1}(N:_S M)\subseteq (T^{-1}N:_{T^{-1}S} T^{-1}M)$ or $\frac{m}{t''}=\frac{um}{ut''}\in T^{-1} N$. Therefore, $T^{-1} N$ is a weakly $1$-absorbing prime in $T^{-1}M$. 
    \end{proof}

\begin{proposition}
  Let $M$ be an $S$-semimodule and $x \in M$. Suppose that $(0:_S x) \subseteq (Sx:_{S} M)$ and $Sx$ is subtractive. Then $Sx$ is a weakly $1$-absorbing prime subsemimodule of $M$ if and only if it is $1$-absorbing prime.
\end{proposition}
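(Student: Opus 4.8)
The plan is to prove the nontrivial implication, since the reverse is immediate: every $1$-absorbing prime subsemimodule is weakly $1$-absorbing prime, as noted just after Definition \ref{d2}. So I would assume that $Sx$ is a weakly $1$-absorbing prime subsemimodule of $M$ (hence proper, by definition) and show directly that it is $1$-absorbing prime. Fix nonunits $a,b\in S$ and $m\in M$ with $abm\in Sx$; the goal is to derive $ab\in(Sx:_{S}M)$ or $m\in Sx$.

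First I would dispose of the easy case: if $abm\neq 0$, then $0\neq abm\in Sx$, and the weakly $1$-absorbing prime hypothesis applied to $a,b,m$ gives the conclusion at once. Thus the whole content lies in the case $abm=0$, where the weak hypothesis says nothing directly about $a,b,m$.

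The key move for the case $abm=0$ is to perturb $m$ by $x$: consider the element $ab(x+m)=abx+abm=abx$, which lies in $Sx$. Now split on whether $abx$ vanishes. If $abx=0$, then $ab\in(0:_{S}x)\subseteq(Sx:_{S}M)$ by hypothesis, and we are done. If $abx\neq 0$, then $0\neq ab(x+m)\in Sx$, so applying the weakly $1$-absorbing prime property to the nonunits $a,b$ and the element $x+m$ yields $ab\in(Sx:_{S}M)$ (done) or $x+m\in Sx$; in this last subcase, since $x\in Sx$ and $Sx$ is subtractive, we conclude $m\in Sx$. In every branch the required dichotomy holds, so $Sx$ is $1$-absorbing prime.

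I do not expect a serious obstacle here. The only slightly non-obvious step is recognizing that the degenerate case $abm=0$ should be handled by replacing $m$ with $x+m$, which puts us back in the ``nonzero product in $Sx$'' situation; the subtractivity of $Sx$ and the containment $(0:_{S}x)\subseteq(Sx:_{S}M)$ are precisely what is needed to absorb the two ways this perturbation could otherwise fail (namely $x+m\in Sx$ and $abx=0$). One should also remember to invoke at the outset that a weakly $1$-absorbing prime subsemimodule is by definition proper, so no separate properness argument is needed.
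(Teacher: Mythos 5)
Your argument is correct and is essentially identical to the paper's proof: both handle $abm\neq 0$ directly, and for $abm=0$ both pass to $ab(x+m)=abx$, using $(0:_{S}x)\subseteq(Sx:_{S}M)$ when $abx=0$ and the subtractivity of $Sx$ to extract $m\in Sx$ from $x+m\in Sx$ otherwise. No gaps.
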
 
\begin{proof}
 Suppose that $Sx$ is a weakly $1$-absorbing prime subsemimodule of $M$ and suppose $abm\in Sx$ for some nonunits $a,b\in S$ and some $m\in M$. If $abm\not=0$, then either $ab \in (Sx:_{S} M)$ or $m \in Sx$ since $Sx$ is weakly $1$-absorbing prime. If $abm=0$, then $ab(x + m) =abx\in Sx$. Either $ab(x + m) \not= 0$ or $ab(x + m) = 0$. If $ab(x + m) \not= 0$, then since $Sx$ is weakly $1$-absorbing prime, we have either $ab \in (Sx:_{S} M)$ or $x+m \in Sx$. But $Sx$ is subtractive, so $ab \in (Sx:_{S} M)$ or $m \in Sx$. If $ab(x + m)=0$, then $abx = 0$ (since $abm=0$). So $ab \in (0:_S x)\subseteq (Sx :_{S} M)$. Therefore, $Sx$ is $1$-absorbing prime. The converse is clear. 
\end{proof}

\begin{definition}
  Let $M$ be an $S$-semimodule and $N$ a weakly $1$-absorbing prime subsemimodule of $M$. Let $a, b \in S$ be nonunits and $m\in M$. We say that $(a, b, m)$ is a triple-zero of $N$ if $abm = 0$, $ab \notin (N:_{S} M)$, and $m \notin N$.
\end{definition} 

\begin{example}
   Let $S=\mathbb{Z}^{\circ}$, $M=\mathbb{Z}_{20}$, and $N=\{\overline{0}\}$. Then  $(2,2,\overline{5})$ is a triple-zero of $N$ since $2\cdot 2\cdot \overline{5}=\overline{0}$, $2\cdot 2=4\notin (N:_{S} M)$, and $\overline{5} \notin N$. 
\end{example}

\begin{remark}\label{rem2}
    If $N$ is a weakly $1$-absorbing prime subsemimodule of $M$ that is not $1$-absorbing prime, then $N$ has a triple-zero $(a, b, m)$ for some nonunits $a, b \in S$ and $m\in M$.
\end{remark} 

\begin{thm}\label{th2}
    Let $S$ be a local semiring and $M$ an $S$-semimodule. If $N$ is a weakly $1$-absorbing prime subtractive subsemimodule of
$M$ and $(a, b, m)$ is a triple-zero of $N$ for some nonunits $a, b\in S$ and $m \in M$, then
\begin{enumerate}
 \item[(1)] $abN= a(N :_{S } M)m = b(N :_{S } M)m = 0$. 
 \item[(2)]  $a(N:_{S } M)N = b(N:_{S } M)N = (N :_{S } M)^{2}m =0$.
\end{enumerate}
\end{thm}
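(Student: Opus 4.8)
The plan is to prove each vanishing assertion by contradiction, in each case perturbing the triple-zero $(a,b,m)$ by elements of $N$ and of $(N:_{S}M)$ so as to produce a \emph{nonzero} element of $N$ to which the defining property of a weakly $1$-absorbing prime subsemimodule can be applied, and then deriving $m\in N$ or $ab\in (N:_{S}M)$ (both impossible, since $(a,b,m)$ is a triple-zero). Two facts are used throughout. First, since $N$ is proper we have $1\notin (N:_{S}M)$, so $(N:_{S}M)$ is a proper ideal and therefore consists of nonunits; as $S$ is local, $s+c$ is then a nonunit whenever $s$ is a nonunit and $c\in (N:_{S}M)$. Second, since $N$ is subtractive, so is $(N:_{S}M)$: if $s, s+t\in (N:_{S}M)$ then for each $x\in M$ we have $sx\in N$ and $(s+t)x=sx+tx\in N$, whence $tx\in N$ and so $t\in (N:_{S}M)$.

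\textbf{Part (1).} To get $abN=0$, suppose $abn\neq 0$ for some $n\in N$. Using $abm=0$, the element $ab(m+n)=abm+abn=abn$ is a nonzero element of $N$; since $a,b$ are nonunits and $ab\notin (N:_{S}M)$, we get $m+n\in N$, hence $m\in N$ by subtractivity of $N$ — a contradiction. To get $a(N:_{S}M)m=0$, suppose $acm\neq 0$ for some $c\in (N:_{S}M)$; then $c$, hence $b+c$, is a nonunit, and $a(b+c)m=abm+acm=acm$ is a nonzero element of $N$, so $m\notin N$ forces $ab+ac=a(b+c)\in (N:_{S}M)$; since $ac\in (N:_{S}M)$, subtractivity of $(N:_{S}M)$ yields $ab\in (N:_{S}M)$, a contradiction. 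Finally $b(N:_{S}M)m=0$ follows by applying the same argument to the triple-zero $(b,a,m)$.

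\textbf{Part (2).} It is enough to show $acn=0$ for all $c\in (N:_{S}M)$, $n\in N$, and $c_{1}c_{2}m=0$ for all $c_{1},c_{2}\in (N:_{S}M)$, since the three sets in question consist of sums of such products. For the first, assume $acn\neq 0$. Then $a(b+c)(m+n)=abm+abn+acm+acn$, and the first three summands vanish by the triple-zero condition and Part (1), so $a(b+c)(m+n)=acn\in N\setminus\{0\}$ with $a,b+c$ nonunits; hence $m+n\in N$ (giving $m\in N$ by subtractivity) or $a(b+c)=ab+ac\in (N:_{S}M)$ (giving $ab\in (N:_{S}M)$, since $ac\in (N:_{S}M)$ and $(N:_{S}M)$ is subtractive) — both contradictions. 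The equality $b(N:_{S}M)N=0$ is symmetric. For $c_{1}c_{2}m=0$, assume not; then $a+c_{1}$ and $b+c_{2}$ are nonunits, and $(a+c_{1})(b+c_{2})m=abm+ac_{2}m+c_{1}bm+c_{1}c_{2}m$, where the first three terms vanish by the triple-zero condition and Part (1) (as $ac_{2}m\in a(N:_{S}M)m=0$ and $c_{1}bm\in b(N:_{S}M)m=0$); so $(a+c_{1})(b+c_{2})m=c_{1}c_{2}m\in N\setminus\{0\}$, whence $m\in N$ or $(a+c_{1})(b+c_{2})\in (N:_{S}M)$, and in the latter case peeling off $ac_{2}, c_{1}b, c_{1}c_{2}\in (N:_{S}M)$ using subtractivity gives $ab\in (N:_{S}M)$ — again both contradictions.

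I do not expect a genuine obstacle here; the content is in selecting the right perturbation in each case and checking that the extra cross terms are actually $0$. The one point that needs care is that a semiring has no subtraction, so each expansion must collapse because the unwanted terms \emph{equal} $0$ — via the triple-zero relation and the identities of Part (1) — rather than merely cancelling, which is why Part (2) has to be carried out after Part (1). This also explains why both hypotheses are needed: ``$S$ local'' keeps the perturbed scalars nonunits, and ``$N$ subtractive'' both makes $(N:_{S}M)$ subtractive and lets us pass from $m+n\in N$ to $m\in N$.
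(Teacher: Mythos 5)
Your proposal is correct and follows essentially the same route as the paper: in each case you perturb the triple-zero by elements of $N$ or $(N:_{S}M)$, use locality of $S$ to keep the perturbed scalars nonunits, apply the weakly $1$-absorbing prime property to the resulting nonzero element of $N$, and then use subtractivity of $N$ and of $(N:_{S}M)$ to reach the contradiction. The only cosmetic difference is that you prove the subtractivity of $(N:_{S}M)$ directly, where the paper cites a lemma.
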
 

\begin{proof}
First, note that since $(a, b, m)$ is a triple-zero of $L$, then $abm = 0$, $ab \notin (N:_{S} M)$, and $m \notin N$.
\begin{enumerate}

\item[(1)] Suppose that $abN\not = 0$. So there exists $n \in N$ such that $abn\not=0$. Now, since $abm =0$, we have $ab(m+n) = abm+ abn= abn \not=0$. So $0\not= ab(m + n) \in N$ but since $N$ is a weakly $1$-absorbing prime subsemimodule, so $ab \in (N :_{S } M)$ or $m + n \in N$. But $N$ is subtractive, so $ab \in (N :_{S } M)$ or $m \in N$, a contradiction since $(a, b, m)$ is a triple-zero of $N$. Hence $abN =0$. Next, assume that $a(N:_{S} M)m \not= 0$. Then there exists $t \in (N:_{S } M)$ such that $atm \not= 0$. So $a(t + b)m=atm+abm=atm+0=atm\not=0$. So $0\not=a(t+b)m=atm\in atM\subseteq aN\subseteq N$. Since $S$ is a local semiring and $t,b$ are nonunits, then $t + b$ is a nonunit. But $N$ is a weakly $1$-absorbing prime subsemimodule, so $a(t + b) \in (N :_{S } M)$ or $m \in N$. Since $N$ is subtractive, then by \cite[Lemma 3 (i)]{ES}, $(N:_{S } M)$ is subtractive but $at \in (N:_{S } M)$ and $at +ab \in (N:_{S } M)$, so $ab \in (N :_{S } M)$. Hence $ab \in (N :_{S } M)$ or $m \in N$, a contradiction since $(a, b, m)$ is a triple-zero of $N$. Thus $a(L  :_{S } M)m = 0$. Similarly, we have $b(L  :_{S } M)m = 0$. 
\item[(2)] Suppose that $a(N:_{S } M)N\not=0$. Then there exist $t \in (N:_{S } M)$ and $n \in N$ such that $atn \not= 0$. By (1), $abN= a(N:_{S } M)m = 0$, so $abn=atm=0$. Thus $a(b + t)(m + n) = abm + abn + atm + atn = atn \not= 0$. We have $0\not=a (b + t)(m + n)=atn\in N$. But $b + t$ is a nonunit (since $S$ is local) and $N$ is weakly $1$-absorbing prime, so $a(b + t) \in (N:_{S } M)$ or $m + n \in N$. But $N$ is subtractive implies $ab \in (N:_{S } M)$ or $m \in N$, a contradiction since $(a, b, m)$ is a triple-zero of $N$. Thus $a(N:_{S } M)N=0$. Similarly, we have $b(N:_{S } M)N=0$. Finally, we show that $(N:_{S } M)^{2}m =0$. Suppose that $(N:_{S } M)^{2}m \not= 0$. Then there exist $s,t \in (N:_{S } M)$ such that $stm \not= 0$. By above, we have $(a +s)(b +t)m = stm \not= 0$ and $0\not= (a + s)(b + t)m \in N$. Since $S$ is a local semiring, we have $a + s$ and $b + t$ are nonunits. Hence $(a + s)(b + t) \in (N:_{S } M)$ or $m \in N$. Since $(N:_{S } M)$ is subtractive, we have $ab \in (N :_{S } M)$ or $m \in N$, a contradiction since $(a, b, m)$ is a triple-zero of $N$. Therefore, $(N:_{S } M)^{2}m = 0$. 
\end{enumerate}
 \end{proof}

\begin{thm}\label{p4}
 Let $S$ be a local semiring and $M$ an $S$-semimodule. If $N$ is a weakly $1$-absorbing prime subtractive subsemimodule of $M$ that is not $1$-absorbing prime, then $(N:_{S } M)^{2}N= 0$.
\end{thm}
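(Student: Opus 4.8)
The plan is to argue by contradiction, using the triple-zero machinery of Theorem \ref{th2}. Since $N$ is weakly $1$-absorbing prime but not $1$-absorbing prime, Remark \ref{rem2} yields a triple-zero $(a,b,m)$ of $N$: nonunits $a,b\in S$ and $m\in M$ with $abm=0$, $ab\notin(N:_{S}M)$, and $m\notin N$. Abbreviating $P=(N:_{S}M)$, Theorem \ref{th2} then supplies all the vanishing products we need: $abN=aPm=bPm=0$ from part (1) and $aPN=bPN=P^{2}m=0$ from part (2). Note also that $P$ is a proper ideal of $S$ (as $N$ is proper), so every element of $P$ is a nonunit; and $P$ is subtractive by \cite[Lemma 3 (i)]{ES}, since $N$ is subtractive.

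Now suppose $P^{2}N\neq 0$ and fix $s,t\in P$ and $n\in N$ with $stn\neq 0$; observe that $stn\in N$, since $sn\in N$ and then $t(sn)\in N$. The key computation is the expansion of $(a+s)(b+t)(m+n)$ into its eight terms $abm,\ abn,\ atm,\ atn,\ sbm,\ sbn,\ stm,\ stn$. Each of the first seven lies in one of the zero sets listed above, namely $abm=0$, $abn\in abN$, $atm\in aPm$, $atn\in aPN$, $sbm\in bPm$, $sbn\in bPN$, and $stm\in P^{2}m$; hence $(a+s)(b+t)(m+n)=stn\neq 0$, and this element lies in $N$. Since $S$ is local and $a,s$ (resp.\ $b,t$) are nonunits, $a+s$ and $b+t$ are nonunits, so the weakly $1$-absorbing prime property of $N$ forces $(a+s)(b+t)\in P$ or $m+n\in N$. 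The latter is impossible: $N$ is subtractive and $n\in N$, so $m+n\in N$ would give $m\in N$. For the former, expand $(a+s)(b+t)=ab+at+sb+st$; the terms $at,sb,st$ all lie in the ideal $P$, and $P$ is subtractive, so $ab\in P=(N:_{S}M)$, contradicting the triple-zero. Therefore $P^{2}N=0$, which is the claim.

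The only real subtlety, and the step I would be most careful with, is the bookkeeping imposed by the absence of subtraction in a semiring: the mixed terms of the product cannot be cancelled, so the argument depends essentially on Theorem \ref{th2} delivering each individual mixed product as \emph{exactly} zero (so that the expansion of $(a+s)(b+t)(m+n)$ collapses verbatim to $stn$), and on $(N:_{S}M)$ being subtractive (so that $ab\in(N:_{S}M)$ can be extracted from $(a+s)(b+t)\in(N:_{S}M)$). With both ingredients in place, the contradiction is immediate and no further computation is required.
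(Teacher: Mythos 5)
Your proof is correct and follows essentially the same route as the paper's: obtain a triple-zero $(a,b,m)$ via Remark \ref{rem2}, use Theorem \ref{th2} to collapse $(a+s)(b+t)(m+n)$ to $stn$, and derive a contradiction from the weakly $1$-absorbing prime property together with subtractivity of $N$ and of $(N:_{S}M)$. Your write-up is in fact more careful than the paper's, making explicit the eight-term expansion and the two subtractivity steps that the paper leaves implicit.
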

\begin{proof}
Assume that $N$ is a weakly $1$-absorbing prime subtractive subsemimodule of $M$ that is not $1$-absorbing prime. So by Remark \ref{rem2}, $N$ has a triple-zero $(a, b, m)$ for some nonunits $a, b \in S$ and $m \in M$. Assume that $(N:_{S } M)^{2}N\not=0$. Then $stn \not= 0$ for some $s,t \in (N:_{S } M)$ and $n \in N$. From Theorem \ref{th2}, we have $(a + s)(b + t)(m + n) = stn \not= 0$. So $0 \not=(a + s)(b + t)(m + n)=stn \in N$. But since $N$ is weakly $1$-absorbing prime, then $(a + s)(b + t) \in (N:_{S } M)$ or $ m+n \in N$ but $N$ is subtractive implies $ab \in (N:_{S } M)$ or $m \in N$, a contradiction since $(a, b, m)$ is a triple-zero of $N$. Thus $(N :_{S } M)^{2}N = 0$. 
\end{proof}

\begin{corollary}
     Let $S$ be a local semiring and $M$ an $S$-semimodule. If $N$ is a weakly $1$-absorbing prime subtractive subsemimodule of $M$ that is not $1$-absorbing prime, then $(N:_{S } M)^{3} \subseteq \text{Ann}(M)$. 
\end{corollary}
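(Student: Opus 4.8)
The plan is to reduce the statement immediately to Theorem \ref{p4}. Recall that by the very definition of the residual ideal, $(N:_{S} M)M \subseteq N$; this is the only structural fact about $(N:_{S} M)$ that is needed here, and it holds without any hypothesis on $N$. The other ingredient is the conclusion of Theorem \ref{p4}: since $S$ is local and $N$ is a weakly $1$-absorbing prime subtractive subsemimodule of $M$ that is not $1$-absorbing prime, we have $(N:_{S} M)^{2}N = 0$.

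Given these two facts, the argument is a one-line inclusion. First I would write $(N:_{S} M)^{3}M = (N:_{S} M)^{2}\bigl((N:_{S} M)M\bigr)$, then use $(N:_{S} M)M \subseteq N$ to get $(N:_{S} M)^{3}M \subseteq (N:_{S} M)^{2}N$, and finally invoke Theorem \ref{p4} to conclude $(N:_{S} M)^{3}M \subseteq (N:_{S} M)^{2}N = 0$. Hence $sM = 0$ for every $s \in (N:_{S} M)^{3}$ (more precisely, every finite sum of products of three elements of $(N:_{S} M)$ kills $M$), which is exactly the assertion $(N:_{S} M)^{3} \subseteq \operatorname{Ann}(M)$.

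There is essentially no obstacle: the content of the corollary is entirely carried by Theorem \ref{p4}, and the passage from $(N:_{S} M)^{2}N = 0$ to $(N:_{S} M)^{3} \subseteq \operatorname{Ann}(M)$ is a formal manipulation of products of ideals and subsemimodules. The only point worth stating carefully in the write-up is the identity $(N:_{S} M)M \subseteq N$, so that the reader sees where the cube (rather than a higher power) comes from.
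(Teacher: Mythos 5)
Your proposal is correct and follows exactly the paper's own argument: both reduce to Theorem \ref{p4} via $(N:_{S} M)^{3}M=(N:_{S} M)^{2}\bigl((N:_{S} M)M\bigr)\subseteq (N:_{S} M)^{2}N=0$ and then conclude $(N:_{S} M)^{3}\subseteq \operatorname{Ann}(M)$. No gaps.
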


\begin{proof}
    Since $(N:_{S } M)M\subseteq N$, then $(N :_{S } M)^{3}M=(N:_{S } M)^{2}(N :_{S } M)M\subseteq (N :_{S } M)^{2}N$. But $(N:_{S } M)^{2}N= 0$ by Theorem \ref{p4}. So $(N :_{S } M)^{3}M=0$. Thus $(N :_{S } M)^{3} \subseteq \text{Ann}(M)$.
\end{proof}

\begin{corollary}\label{prop312} Let $S$ be a local semiring and $M$ be a multiplication $S$-semimodule. If $N$ is a weakly $1$-absorbing prime subtractive subsemimodule of $M$ that is not $1$-absorbing prime, then $N^{3} = 0$.
\end{corollary}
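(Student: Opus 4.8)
The plan is to reduce the statement to Theorem~\ref{p4} (equivalently, to the corollary immediately preceding it) by exploiting the multiplication hypothesis. The point is that in a multiplication semimodule the ``cube'' $N^3$ of a subsemimodule is nothing but $(N:_{S}M)^{3}M$, and we already know this is zero.

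First I would recall the two facts about multiplication semimodules used already in Theorem~\ref{th2N}: for any subsemimodule $K$ of $M$ we have $K=(K:_{S}M)M$, and for subsemimodules $K_{1}=I_{1}M$, $K_{2}=I_{2}M$ the product is $K_{1}K_{2}=I_{1}I_{2}M$ (well-defined since one may always take $I_{i}=(K_{i}:_{S}M)$; cf.\ \cite{NG}). Writing $Q=(N:_{S}M)$, this gives $N=QM$ and hence
\[
N^{3}=(QM)(QM)(QM)=Q^{3}M .
\]

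Next I would invoke the hypotheses: $S$ is local, and $N$ is a weakly $1$-absorbing prime subtractive subsemimodule of $M$ that is not $1$-absorbing prime, so Theorem~\ref{p4} applies and yields $Q^{2}N=0$. Since $QM=N$, associativity of the $S$-action on $M$ gives $Q^{3}M=Q^{2}(QM)=Q^{2}N=0$ (this is exactly the content of the corollary stating $Q^{3}\subseteq\operatorname{Ann}(M)$). Combining with the displayed identity, $N^{3}=Q^{3}M=0$, which is the claim.

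The only step that needs care — and the one I would flag as the ``main obstacle,'' although it is routine — is the bookkeeping identity $N^{3}=(N:_{S}M)^{3}M$: it relies on the well-definedness of products of subsemimodules of a multiplication semimodule and on the associativity $Q^{2}(QM)=Q^{3}M$ of ideals acting on $M$. Both are standard within the multiplication-semimodule framework of \cite{NG} already used in this section, so once they are recorded the proof is immediate.
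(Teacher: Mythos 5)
Your argument is correct and is essentially identical to the paper's own proof: both write $N=(N:_{S}M)M$, deduce $N^{3}=(N:_{S}M)^{3}M=(N:_{S}M)^{2}N$, and conclude by Theorem~\ref{p4}. No further comment is needed.
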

\begin{proof} Since $M$ is a multiplication $S$-semimodule, $N=(N:_{S } M)M $. Thus
\begin{align*}
    N^{3}=& ~(N:_{S } M)^{3}M\\
         =&~(N:_{S } M)^{2}(N:_{S } M)M\\
       =&~(N:_{S } M)^{2}N \\
     =&~0 \hspace{2cm} \text{by Theorem \ref{p4}}
\end{align*}

\end{proof}

\begin{thm}
    Let $S$ be a local semiring and $M$ an $MC$ multiplication $S$-semimodule. Let $I$ be a proper ideal of $S$ such that $I^3\not=0$ and $IM$ is subtractive. Then the following statements are equivalent:  
    \begin{enumerate}
        \item[(1)] $IM$ is a weakly $1$-absorbing prime subsemimodule of $M$.
          \item[(2)] $IM$ is a $1$-absorbing prime subsemimodule of $M$.
        \item[(3)] $I$ is a $1$-absorbing prime ideal of $S$.
        \item[(4)] $I$ is a weakly $1$-absorbing prime ideal of $S$.
    \end{enumerate}
    \end{thm}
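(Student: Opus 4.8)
The plan is to establish the cycle of implications $(1)\Rightarrow(2)\Rightarrow(3)\Rightarrow(4)\Rightarrow(1)$, which lets us reuse the results already proved. Three of the four arrows cost nothing: $(2)\Rightarrow(3)$ is exactly Theorem \ref{thmIM} (valid for any $MC$ multiplication $S$-semimodule), $(4)\Rightarrow(1)$ is exactly Theorem \ref{thmIM2} (whose hypothesis that $IM$ be subtractive is part of our standing assumptions), and $(3)\Rightarrow(4)$ is immediate from the definitions, since a $1$-absorbing prime ideal is in particular weakly $1$-absorbing prime. Thus all the substance lies in $(1)\Rightarrow(2)$, and this is where the hypothesis $I^{3}\neq 0$ is used.

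For $(1)\Rightarrow(2)$ I would argue by contradiction. Suppose $IM$ is weakly $1$-absorbing prime but not $1$-absorbing prime. Since $S$ is local, $M$ is a multiplication $S$-semimodule, and $IM$ is subtractive, Corollary \ref{prop312} applies to $N=IM$ and yields $(IM)^{3}=0$. Unravelling this via the identity $N^{3}=(N:_{S}M)^{3}M$ used in the proof of that corollary, and using the trivial containment $I\subseteq (IM:_{S}M)$, we get $I^{3}M\subseteq (IM:_{S}M)^{3}M=(IM)^{3}=0$. Finally, an $MC$ semimodule is faithful (\cite{NG}), so $\text{Ann}(M)=0$; hence $I^{3}M=0$ forces $I^{3}=0$, contradicting $I^{3}\neq 0$. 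Therefore $IM$ must be $1$-absorbing prime, and the cycle closes.

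The only point needing a moment's care is the passage from $(IM)^{3}=0$ to $I^{3}M=0$: it rests on the multiplication-semimodule identity $(IM)^{3}=(IM:_{S}M)^{3}M$ together with $I\subseteq(IM:_{S}M)$, after which faithfulness of $M$ finishes the job. Everything else is a direct citation of Theorems \ref{thmIM} and \ref{thmIM2} and of the definitions, so I anticipate no genuine obstacle beyond this bookkeeping.
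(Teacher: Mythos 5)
Your proposal is correct and follows essentially the same route as the paper: the cycle $(1)\Rightarrow(2)\Rightarrow(3)\Rightarrow(4)\Rightarrow(1)$, with $(2)\Rightarrow(3)$ from Theorem \ref{thmIM}, $(4)\Rightarrow(1)$ from Theorem \ref{thmIM2}, and $(1)\Rightarrow(2)$ resting on Corollary \ref{prop312} together with the containment $I^{3}M\subseteq (IM)^{3}$ and the faithfulness of an $MC$ semimodule. The paper phrases this last step directly ($I^{3}\neq 0$ forces $(IM)^{3}\neq 0$, so Corollary \ref{prop312} cannot apply) rather than by contradiction, but the argument is the same.
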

\begin{proof} 
$(1)\Rightarrow (2)$. Since $I^3\not=0$, and $M$ is faithful (since $M$ is $MC$), then $I^3M\not=0$ and so $(IM)^3\not=0$. So by (1) and Corollary \ref{prop312}, (2) holds.  \\
$(2)\Rightarrow (3)$. Follows from Theorem \ref{thmIM}.\\
$(3)\Rightarrow (4)$. Clear.\\
$(4)\Rightarrow (1)$. Follows from Theorem \ref{thmIM2}.

\end{proof}

\vspace{0.2cm}

\end{document}